\newcommand{\eps}{\varepsilon}
\newcommand{\Crit}{\textrm{Crit}}
\newcommand{\md}{\textrm{mod}}
\newcommand{\CV}{\textrm{CV}}
\newcommand{\tp}{\textrm{top}}
\newcommand{\dist}{\textrm{dist}}
\newcommand{\R}{\mathbb{R}}
\newcommand{\sA}{\mathscr{A}}
\newcommand{\hV}{\widehat{V}}
\newcommand{\hL}{\widehat{L}}
\newcommand{\hR}{\widehat{R}}
\newcommand{\hJ}{\widehat{J}}
\newcommand{\hW}{\widehat{W}}
\newcommand{\hU}{\widehat{U}}
\newcommand{\hB}{\widehat{B}}
\newtheorem{theorem}{Theorem} 
\newtheorem{lemma}{Lemma}     
\newtheorem{proposition}{Proposition}
\title[]
{Interval maps quasi-symmetrically conjugated to a  piecewise affine map}
\author{Huaibin Li}
\thanks{The author was partially supported by
the National Natural Science Foundation of China (Grant No. 11101124) and
the FONDECYT grant 3110060 of Chile.}
\address{Huaibin Li, Facultad de Matem{\'a}ticas, Pontificia Universidad Cat{\'o}lica de Chile, Avenida Vicu{\~n}a Mackenna~4860, Santiago, Chile}
\subjclass[2010]{37E05}
\keywords{Interval maps, maximal entropy measure, quasi-symmetry conjugacy, semi-hyperbolicity}
\begin{document}
\maketitle

\begin{abstract}
Consider a  multimodal interval map $f$ of $C^3$ with non-flat critical points.  We establish  several characterizations of the map~$f$ is  quasi-symmetrically conjugated to a  piecewise affine map in the case $f$ is topologically exact and all of its periodic points are hyperbolic repelling. In particular, we give a negative answer to a question posted by Henk Bruin in~\cite{Bru96}.
\end{abstract}

\section{Introduction}\label{sec:intro}

Multimodal interval maps are often topologically conjugate to piecewise linear maps. So in a topological sense,
such a multimodal map is the same as a piecewise linear map. In a metric sense, however, the difference is
clear. Some metric similarities still occur, when the conjugacy satisfies certain constraints.

Let $I$ be a compact interval of $\R$. A homeomorphism $h: I \to I$ on the
interval is \emph{quasi-symmetric} if there exists constant $K \geq 1$ such that for all $x\in I$ and
all $\eps> 0$ with $x\pm\eps$ in $I$ we have $$\frac{1}{K}\leq \frac{|h(x+\eps)-h(x)|}{|h(x)-h(x+\eps)|}\leq K.$$
The notion of quasi-symmetry gained importance since Sullivan~\cite{Sul86} proved the
following rigidity  result: Let $f_a(x)=1-ax^2$, if $f_a$ and $f_b$ are quasi-symmetrically conjugate
and do not have a periodic attractor, then $a= b.$

In this paper, we give several characterizations of a multimodal interval map that is quasi-symmetrically conjugate to a piecewise affine map. To state our main results, let us be more precise. Let $I$ be a compact interval of $\R.$ A non-injective continuous map $f: I\to I$ is \emph{multimodal}, if there is a finite
partition of $I$ into intervals on each of which $f$ is injective. A multimodal map $f: I\to I$ is \emph{topologically exact}, if for every open subset~$U$ of~$I$ there is an integer $n\geq 1$ such
that $f^n(U) = I.$
A \emph{turning point} of a multimodal map $f: I \to I$ is a point in $I$ at which $f$ is not locally injective.
For a differentiable multimodal map $f: I\to I$,  a point of~$I$ is \emph{critical of~$f$} if the derivative of~$f$ vanishes at it. In what follows, denote by~$\Crit(f)$ the set of critical points of~$f$, denote by $\Crit'(f)$ the turning points of $f$,  and put $\CV(f):=f(\Crit(f))$.  A $C^1$ multimodal map~$f : I \to I$ is \emph{of class~$C^3$ with non-flat critical points}, if
\begin{itemize}
\item
The map~$f$ is of class~$C^3$ outside $\Crit(f)$;
\item
For each critical point~$c$ of~$f$ there exists a number $\ell_c>1$ and diffeomorphisms~$\phi$ and~$\psi$ of~$\R$ of class~$C^3$, such that $\phi(c)=\psi(f(c))=0$, and such that on a neighborhood of~$c$ on~$I$, we have
$|\psi\circ f| = |\phi|^{\ell_c}. $
\end{itemize}
Recall that for an integer $n\geq 1$, a periodic
point $p$ of $f$ of period $n$ is \emph{hyperbolic repelling} if $|Df^n(p)|>1,$  and that  a critical point $c\in \Crit(f)$ is called \emph{recurrent} if $c\in \omega(c),$ where $\omega(c)$ denote the $\omega$-limit set of $c$ that is the set of  accumulation  points of the forward orbit $\{f^n(c)\}_{n=0}^{+\infty}$ of $c$.

The \emph{topological entropy $h_\tp(f)$ of $f$} is equal to
the supremum of the metric entropies $h_\mu(f)$ taken over all $f$-invariant Borel probability
measures $\mu$, see for example~\cite{Wal82}.
A $f$-invariant Borel probability measure $\mu$ such that $h_\mu(f) = h_\tp(f)$ is called a \emph{maximal entropy measure.}
It is well-known that a multimodal   interval map $f:I \to I$ that is topologically exact has a unique maximal entropy measure $\mu_f$. Moreover, the topological support of $\mu_f$ is equal to $I$, and the Jacobian of~$\mu_f$ is constant equal to $\exp(h_{\tp}(f)).$

For a point $x$ in $I$, $r>0$, an integer $m\geq 1,$ and each $j$ in $\{0, 1, \cdots, m-1\}$, let $W_j$ be the pull-back of $B(f^m(x), r)\cap I$ by $f^{m-j}$ containing $f^j(x)$.  The \emph{criticality of $f^m$ at $x$ with respect to $r$} is defined as the following number $$\# \{j\in \{0, 1, \cdots, m-1\}: W_j\cap \Crit'(f)\neq \emptyset\}.$$ Moreover, the map $f$ is said to be \emph{semi-hyperbolic}, if there exist constants $r>0$ and $D\geq 1$ such that for every $x$ in $I$ and each integer $n\geq 1$ the criticality of $f^n$ at $x$ with respect to $r$ is at most $D$.

The main result of this paper is the following theorem.
\begin{theorem}\label{thm:qs}
Let $f:I \to I$ be a multimodal map of class~$C^3$ with non-flat critical points and with all periodic points hyperbolic repelling. If $f$ is topologically exact, then the following statements are equivalent.
\begin{enumerate}[(1).]
\item $f$ is semi-hyperbolic;
\item  $f$ has no recurrent critical points;
\item The maximal entropy measure of $f$ is doubling;
\item $f$ is quasi-symmetrically conjugate to a piecewise affine  function with slope equal to $\pm \exp (h_\tp(f)).$
\end{enumerate}
\end{theorem}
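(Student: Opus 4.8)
The strategy is to prove the cycle of implications $(1)\Rightarrow(2)\Rightarrow(3)\Rightarrow(4)\Rightarrow(1)$, using the general theory of non-flat multimodal maps together with thermodynamic formalism for the measure of maximal entropy. Each arrow relies on rather different machinery, so I will set them up one at a time.

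For $(1)\Rightarrow(2)$: this is essentially a contrapositive argument. If $f$ has a recurrent critical point $c$, then, since all periodic points are hyperbolic repelling (so $c$ is not periodic and its orbit does not converge to a periodic orbit), the forward orbit of $c$ returns to any neighborhood of $c$ infinitely often. One then builds, for any candidate radius $r>0$ and any bound $D$, a point $x$ and an iterate $n$ whose criticality with respect to $r$ exceeds $D$: take $x$ close to $c$, and follow the orbit; each near-return of the orbit of $c$ to $c$ forces a pull-back of $B(f^n(x),r)$ to contain a turning point. The content here is controlling the geometry of pull-backs along a recurrent orbit; the Koebe principle for non-flat maps (valid because periodic points are hyperbolic repelling, hence there are no wandering intervals and no parabolic phenomena) gives the bounded-distortion input needed to make "$x$ close to $c$" propagate. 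I expect this implication to be the most technical, because it is where the ``real'' one-dimensional dynamics (Koebe, real bounds, absence of wandering intervals) enters.

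For $(2)\Rightarrow(3)$: with no recurrent critical points, $f$ is ``backward contracting'' / has the Collet--Eckmann-type expansion features that make the transfer operator for the potential $-\log|Df|$ replaced here by the constant potential behave well; more to the point, absence of recurrent critical points plus hyperbolic repelling periodic points is known to imply semi-hyperbolicity-type estimates giving good control of the sizes of pull-backs of balls. Concretely, I would show the maximal entropy measure $\mu_f$, whose Jacobian is the constant $\exp(h_{\tp}(f))$, is doubling by establishing that for the natural partition into monotone branches, the measures $\mu_f$ of dynamically defined intervals are comparable to powers of $\lambda:=\exp(-h_{\tp}(f))$, and then transferring a nearby point estimate across branches using bounded distortion away from critical points and the non-flatness normal form near critical points. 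The doubling property is then a consequence of the ``bounded geometry'' of the induced Markov structure.

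For $(3)\Rightarrow(4)$: define $h:I\to I$ by $h(x):=\mu_f([\min I,x])\cdot|I| + \min I$, i.e. $h$ is (an affine rescaling of) the distribution function of $\mu_f$. Since $\mu_f$ has full support and no atoms, $h$ is a homeomorphism; since the Jacobian of $\mu_f$ is the constant $\exp(h_{\tp}(f))$, the map $g:=h\circ f\circ h^{-1}$ pushes Lebesgue measure forward with constant Jacobian $\exp(h_{\tp}(f))$ on each branch, hence $g$ is piecewise affine with slopes $\pm\exp(h_{\tp}(f))$. Finally $h$ is quasi-symmetric precisely because $\mu_f$ is doubling: a doubling measure's distribution function is a quasi-symmetric homeomorphism (this is classical, e.g. via the Beurling--Ahlfors characterization). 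For $(4)\Rightarrow(1)$: if $h$ is quasi-symmetric and $g=h\circ f\circ h^{-1}$ is piecewise affine with constant slope modulus $>1$, then $g$ is uniformly expanding away from its turning points, and in particular semi-hyperbolic with an obvious bound; semi-hyperbolicity is a conjugacy invariant under quasi-symmetric conjugacies because quasi-symmetric maps distort ratios of nearby intervals by a bounded factor, so the criticality counts for $f$ and $g$ with respect to suitably related radii agree up to a bounded additive constant. This last step is short once the definitions are unwound.

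The main obstacle I anticipate is $(1)\Rightarrow(2)$ (equivalently the genuinely dynamical direction $(2)\Rightarrow(3)$ if one reorganizes): showing that the combinatorial/recurrence structure forces, or forbids, the uniform pull-back bounds requires the full strength of the real bounds and the Koebe principle for $C^3$ non-flat maps without attracting or neutral cycles, and one must be careful that topological exactness is used to rule out the pathological invariant subsets on which these estimates could fail.
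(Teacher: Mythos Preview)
Your cycle $(1)\Rightarrow(2)\Rightarrow(3)\Rightarrow(4)\Rightarrow(1)$ breaks at the last arrow. You assert that the piecewise affine model $g$ is ``semi-hyperbolic with an obvious bound'' because it is uniformly expanding away from its turning points. This is false. Semi-hyperbolicity, as defined here, is a bound on how many times a chain of pull-backs meets $\Crit'(f)$; it has nothing to do with expansion of the derivative. A tent map with slope $s>1$ can have its turning point $c$ recurrent (indeed the orbit of $c$ is typically dense), and then exactly the elementary argument you yourself sketch for $(1)\Rightarrow(2)$ shows such a tent map is \emph{not} semi-hyperbolic: build nested pull-backs $W_0=B(c,r/2)\supset W_1\supset\cdots$ each containing $c$, and after $D+1$ returns the criticality exceeds $D$. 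So your ``obvious'' premise fails, and the step $(4)\Rightarrow(1)$ is a genuine gap. This is not a technicality: the implication $(4)\Rightarrow(2)$ is precisely Bruin's question, and it is the deepest part of the theorem.

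Relatedly, your difficulty assessment is inverted. The implication $(1)\Rightarrow(2)$ is the \emph{easiest}: if $c$ is recurrent, the pull-back chain just described contradicts semi-hyperbolicity, with no Koebe or real bounds needed. The paper proves $(1)\Leftrightarrow(2)$ in a short lemma using only backward Lyapunov stability of pull-backs. The hard work lies in $(1)\Rightarrow(3)$ (semi-hyperbolic $\Rightarrow$ doubling), which uses Koebe-type distortion along chains of bounded criticality, and above all in $(3)\Rightarrow(2)$ (doubling $\Rightarrow$ non-recurrence), which the paper proves by first deducing the TCE condition from the lower H\"older bound $\mu_f(B(x,r))\ge Cr^\alpha$ that any doubling measure satisfies, then using TCE to produce nice couples of arbitrarily large modulus, and finally showing that a recurrent critical point would force incompatible measure ratios on certain adjacent intervals, contradicting doubling. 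Your sketch of $(3)\Rightarrow(4)$ via the distribution function of $\mu_f$ is correct and matches the paper; but you must close the cycle through $(3)\Rightarrow(2)$ or $(4)\Rightarrow(2)$, and that requires substantially more than you have written.
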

Recall that a Borel measure $\mu$  on a metric space  $(X, \dist)$ is said to be \emph{doubling}, if there are constants $C_*>0$ and $r_*>0$ such that for each $x$ in $X$ and $r$ in $(0, r_*)$ we have $$\mu(B(x, 2r))\leq C_* \mu(B(x,r)).$$

The concept of semi-hyperbolicity was introduced by Carleson, Jones and Yoccoz to characterize those complex polynomials whose basin of attraction of infinity is a John domain~\cite{CarJonYoc94}. In the context of complex polynomials, they proved the equivalence between $(1)$ and $(2)$ of Theorem~\ref{thm:qs}. See also~\cite{Yin99, Mih11} for an extension to complex rational maps. The proof in the case of interval maps is simpler (Lemma~\ref{l:sei2nonre} in~\S\ref{1and2}), thanks to the backward Lyapunov stability of interval maps (Lemma~\ref{l:pullstable} in~\S\ref{1and2}).
The equivalence between conditions $(3)$ and $(4)$ of Theorem~\ref{thm:qs} is a simple consequence of the theory of Parry and Milnor and Thurston~\cite{Par66, MilThu88}, see Proposition~\ref{p:qs2doubling} in \S\ref{3and4}. The implication $(2)\Rightarrow (4)$ is new even for unimodal map. It gives an alternative proof of~\cite[Theorem~1]{Bru96} when combined with~\cite{Sch11}. The implication $(4)\Rightarrow (2)$ is also new, when restricted to unimodal maps it answers a question posted by Bruin~\cite[Question]{Bru96}, see also~\cite{BruBru04}: \emph{Are there non-Misiurewicz maps that are quasi-symmetrically conjugate to tent-maps?} Recall that a unimodal interval map $f$ satisfies the \emph{Misiurewicz condition} if the critical point of $f$ is not periodic, and has a forward orbit which stays away from itself. We notice that for unimodal maps the Misiurewicz condition coincides with semi-hyperbolicity, but these conditions are different for maps with several critical points.

The core of the proof of Theorem~\ref{thm:qs} is the proof of the implications $(1)\Rightarrow (3)$ (Proposition~\ref{semi2doubl} in~\S\ref{123}) and $(3)\Rightarrow (2)$ (Proposition~\ref{semidoubl} in~\S\ref{322}). The proof of these implications follow in general lines used in the proof of the correspondent implications of~\cite[Theorem~A]{Riv10}, and of the existence of nice couples with arbitrarily large modulus for TCE maps, see~\cite{PrzRiv07}. However, the proofs are different of various places, mainly due to the fact that interval maps are not open in general.

\begin{proof}[Proof of Theorem~\ref{thm:qs}]
The equivalence between $(1)$ and $(2)$ is shown in Lemma~\ref{l:sei2nonre}. That $(1)$ implies $(3)$ follows from Proposition~\ref{semi2doubl}.   That $(3)$ implies $(2)$ is Proposition~\ref{semidoubl}. The equivalence between $(3)$ and $(4)$ is shown in Proposition~\ref{p:qs2doubling}.
\end{proof}

\subsection{Notations}
Throughout the rest of this paper, let $I=[0,1]$ endowed with the distance induced by the absolute value $|\cdot|$ on $\R,$ and denote by~$\sA$ the collection of interval maps of class~$C^3$ with non-flat critical points and with all periodic points hyperbolic repelling. Put $$\ell_{\max}:=\max\{\ell_c: c\in \Crit(f)\} \text{  and  }  \ell_{\min}:=\min\{\ell_c: c\in \Crit(f)\}.$$  For an interval $J$
contained in $I,$ we denote by $|J|$ its length and for $\eta > 0$ we denote by~$\eta J$ the open
interval of~$\R$ of length $(1+2\eta) |J|$ that has the same middle point as~$J.$

For each $x\in I$ and $r>0$ we denote by $B_I(x, r)$ the ball of $I$ centered at $x$ and of radius $r$ defined as follows: $B_I(x,r):=\{y\in I: |x-y|<r\}.$

\subsection{Acknowledgments}
\label{ss:Acknowledgements}
The main idea of this paper came to the author after several discussions with Juan Rivera-Letelier on his work~\cite{Riv10}, and he read an earlier version very carefully. I am very grateful to him for those stimulating conversations  and for his useful comments and corrections to an earlier version of this paper.

\section{Non-recurrence and quasi-symmetry}
In this section, we prove the equivalences between $(1)$ and $(2)$, $(3)$ and $(4)$ in Theorem~\ref{thm:qs}.
\subsection{Semi-hyperbolicity and non-recurrent critical points}\label{1and2}
Given a multmodal map $f: I\to I,$ an integer $n\geq 1$ and a subset $J$ of $I$, a connected component of $f^{-n}(J)$ will be called a \emph{pull-back} of $J$ by $f^n.$ The following general facts of multimodal maps is used for several time in what follows, see for example~\cite{Riv1206} for a proof.
\begin{lemma}[Lemma~A.2, \cite{Riv1206}]\label{l:pullstable}
Let $f: I \to I$ be an interval map in~$\sA$ that is topologically exact.
Then for every $\kappa> 0$ there is $\delta> 0$ such that for every $x$ in $I$, every integer $n\geq 1$, and every pull-back $W$ of $B(x, \delta)$ by $f^n$, we have $|W| < \kappa.$
\end{lemma}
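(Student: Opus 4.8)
The plan is to argue by contradiction, extracting a limiting pull-back of positive length on which all iterates of~$f$ are injective, and then deriving a contradiction with topological exactness. Suppose the statement fails for some $\kappa > 0$. Then there are sequences $\delta_k \to 0$, points $x_k \in I$, integers $n_k \geq 1$, and pull-backs $W_k$ of $B(x_k,\delta_k)$ by $f^{n_k}$ with $|W_k| \geq \kappa$. First I would note that $n_k \to \infty$: since $f$ has finitely many critical points and is of class $C^3$ off $\Crit(f)$, the derivative $Df$ is bounded on $I$, say $|Df| \leq M$, so any pull-back of an interval of length $\delta_k$ by a bounded number of iterates has length bounded by a fixed power of $M$ times $\delta_k$, which tends to $0$; hence $|W_k| \geq \kappa$ forces $n_k \to \infty$. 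Passing to a subsequence, I would assume $W_k$ converges in the Hausdorff topology to a closed interval $W$ with $|W| \geq \kappa > 0$, and $x_k \to x$ for some $x \in I$.

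The key step is a diagonal/normality argument to produce an interval $W' \subset \mathrm{int}(W)$, still of definite length, and a single increasing sequence of iterates along which the branches are well controlled. Fix a nontrivial closed subinterval $W' \subset \mathrm{int}(W)$ with $|W'| \geq \kappa/2$; for $k$ large, $W' \subset W_k$. For each $j \geq 0$ consider the intervals $f^j(W_k)$ for $k$ large (defined once $n_k > j$): because $W_k$ is a pull-back of a small ball, each $W_k$ maps \emph{monotonically} under $f, f^2, \dots, f^{n_k}$, i.e.\ $W_k$ contains no critical point in its interior under any of these iterates, and more strongly $f^j(W_k)$ contains at most one critical point for each $j < n_k$ (this is the standard property of pull-backs: the $j$-th image $W_j$ in the notation of the paper can meet $\Crit'(f)$, but the construction chooses connected components so that each image is an interval on which the next iterate is monotone). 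The quantitative input I would use is that, since $W_k$ has length at least $\kappa$ and $f^{n_k}(W_k) = B(x_k,\delta_k)$ has length $2\delta_k \to 0$, the ``total contraction'' along the orbit is enormous; combined with the Koebe principle / Denjoy-type distortion bounds available for $C^3$ maps with non-flat critical points (once one works on a slightly shrunken interval to gain Koebe space), one concludes that $f^j(W')$ has length bounded below by a fixed constant for \emph{all} $j \in \{0,1,\dots,n_k\}$ — in particular the images $f^j(W_k)$ cannot all be short.

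From here the contradiction with topological exactness is quick: by compactness and a pigeonhole argument there is a fixed subinterval $J_0 \subset I$ of definite length and infinitely many pairs $(k, j)$ with $f^j(W') \supset J_0$ (or $f^j$ maps $W'$ over $J_0$ with bounded distortion). Applying topological exactness to $J_0$, there is an integer $N$ with $f^N(J_0) = I$. But then for $k$ large with $n_k > j + N$ we get $f^{j+N}(W') = I$, while $j + N < n_k$ means $f^{j+N}(W') \subset f^{j+N}(W_k)$ is a proper subinterval of $I$ being further iterated monotonically to the tiny ball $B(x_k,\delta_k)$ — in particular $f^{j+N}(W_k) \subsetneq I$ is forced to have length less than $1$, contradicting $f^{j+N}(W') = I$.

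The main obstacle is the uniform distortion control in the middle paragraph: one must ensure that the pull-backs $W_k$, shrunk slightly to $W'$, have images $f^j(W_k)$ of uniformly bounded geometry so that the contraction is genuinely spread out rather than concentrated near critical points. This is exactly where the $C^3$ non-flat hypothesis and the absence of non-repelling periodic points enter, via the real Koebe principle and the Mañé-type hyperbolicity away from critical points; the cited reference~\cite{Riv1206} presumably carries out precisely this estimate, so in the paper one simply invokes it. I would structure the write-up to reduce everything to that one quantitative distortion lemma and then run the soft compactness-plus-exactness argument above.
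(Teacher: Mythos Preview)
The paper does not give its own proof of this lemma; it is quoted from \cite{Riv1206} with the remark ``see for example~\cite{Riv1206} for a proof.'' So there is nothing in the present paper to compare against, and I assess your sketch on its own.

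Your final contradiction is the right idea and is in fact essentially the whole proof, with no need for the machinery you invoke beforehand. By topological exactness and a finite cover there is $N = N(\kappa)$ such that every interval $J \subset I$ with $|J| \geq \kappa$ satisfies $f^N(J) = I$. If $W$ is a pull-back of $B(x,\delta)$ by $f^n$ with $|W| \geq \kappa$ and $n > N$, then $f^n(W) = f^{n-N}(f^N(W)) = f^{n-N}(I) = I$, contradicting $f^n(W) \subset B(x,\delta)$ for small $\delta$. For $n \leq N$ one bounds $|W|$ directly: by continuity and compactness there is $\delta(\kappa,N) > 0$ such that any interval whose image under some $f^j$, $j \leq N$, has length below $\delta$ must itself have length below $\kappa$. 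No Hausdorff limits, no Koebe principle, and no hyperbolicity estimates are needed; the $C^3$/non-flat and ``all periodic points repelling'' hypotheses play no role here.

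The middle paragraph of your sketch, by contrast, contains two genuine errors. First, it is \emph{not} true that a pull-back $W_k$ is mapped monotonically by $f, f^2, \dots, f^{n_k}$, nor that each $f^j(W_k)$ contains at most one turning point: intermediate images of a long pull-back can themselves be long and can meet several turning points --- ruling this out is precisely the content of the lemma, so you cannot assume it. Second, the claim that ``$f^j(W')$ has length bounded below by a fixed constant for all $j \in \{0,1,\dots,n_k\}$'' is false as written (at $j = n_k$ the image sits inside $B(x_k,\delta_k)$, which is arbitrarily short) and is in any case not what the real Koebe principle provides. You flag this step as ``the main obstacle'' and defer it to \cite{Riv1206}, but in fact there is no such obstacle: the direct argument above bypasses distortion control entirely.
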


In the complex setting, the following result was shown by Carleson, Jones and Yoccoz in~\cite{CarJonYoc94}, see also~\cite{Yin99, Mih11}. Thanks to Lemma~\ref{l:pullstable}, the proof in the real case is much simpler than that in the complex setting.
\begin{lemma}\label{l:sei2nonre}
$f$ is semi-hyperbolic if and only if it has no recurrent critical points.
\end{lemma}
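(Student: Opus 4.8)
The plan is to prove both implications by contraposition, producing in each case the relevant behaviour as a chain of pull-backs of one small fixed ball. Throughout, recall that a critical point of $f$ cannot be periodic (its orbit has vanishing multiplier, while the periodic points of $f$ are hyperbolic repelling), and that $f\in\mathscr{A}$ is topologically exact, so Lemma~\ref{l:pullstable} applies.

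\textbf{No recurrent critical point $\Rightarrow$ semi-hyperbolic.} I prove the contrapositive: if $f$ is not semi-hyperbolic then $f$ has a recurrent critical point. Fix $\kappa>0$ and let $\delta=\delta(\kappa)>0$ be the constant of Lemma~\ref{l:pullstable}, so every pull-back of a ball of radius $\delta$ by an iterate of $f$ has length $<\kappa$. Since $f$ is not semi-hyperbolic, there are $x\in I$ and $n\geq1$ such that, writing $W_{j}$ for the pull-back of $B(f^{n}(x),\delta)\cap I$ by $f^{n-j}$ containing $f^{j}(x)$, more than $\#\Crit'(f)$ indices $j\in\{0,\dots,n-1\}$ satisfy $W_{j}\cap\Crit'(f)\neq\emptyset$. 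By the pigeonhole principle one finds $j<j'$ among these and a turning point $\widehat{c}$ with $\widehat{c}\in W_{j}\cap W_{j'}$. The standard inclusion $f^{j'-j}(W_{j})\subseteq W_{j'}$ then gives $f^{j'-j}(\widehat{c})\in W_{j'}$; as $\widehat{c}\in W_{j'}$ and $|W_{j'}|<\kappa$, we get $|f^{j'-j}(\widehat{c})-\widehat{c}|<\kappa$ with $j'-j\geq1$. Letting $\kappa$ run through a sequence tending to $0$ and using the finiteness of $\Crit'(f)$, we obtain a single turning point $\widehat{c}$ and integers $\ell_{m}\geq1$ with $f^{\ell_{m}}(\widehat{c})\to\widehat{c}$. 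Since $\widehat{c}$ is not periodic, $f^{\ell_{m}}(\widehat{c})\neq\widehat{c}$, and the $\ell_{m}$ must be unbounded (otherwise a constant subsequence would force $f^{\ell}(\widehat{c})=\widehat{c}$); hence $\widehat{c}\in\omega(\widehat{c})$, so $\widehat{c}$ is a recurrent critical point.

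\textbf{Semi-hyperbolic $\Rightarrow$ no recurrent critical point.} I prove the contrapositive: if $f$ has a recurrent critical point $c$, then for an arbitrary $r>0$ and $D\geq1$ I exhibit $x$ and $n$ with criticality of $f^{n}$ at $x$ with respect to $r$ exceeding $D$, so that $f$ is not semi-hyperbolic. Put $\Delta:=B(c,r/2)\cap I$ and define open neighbourhoods $\Delta^{(0)}:=\Delta,\Delta^{(1)},\dots,\Delta^{(D+1)}$ of $c$ recursively: given $\Delta^{(k)}$, use $c\in\omega(c)$ to pick $m_{k+1}\geq1$ with $f^{m_{k+1}}(c)\in\Delta^{(k)}$ and let $\Delta^{(k+1)}$ be the connected component of $f^{-m_{k+1}}(\Delta^{(k)})$ containing $c$. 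Writing $M_{k}:=m_{1}+\dots+m_{k}$, composing pull-backs shows that $\Delta^{(k)}$ is the pull-back of $\Delta$ by $f^{M_{k}}$ containing $c$, and also the pull-back of $\Delta^{(i)}$ by $f^{M_{k}-M_{i}}$ containing $c$ for every $i\leq k$; in particular $f^{M_{D+1}-M_{k}}(c)\in\Delta^{(k)}$ for $k=1,\dots,D+1$. Take $x:=c$ and $n:=M_{D+1}$. Since $f^{n}(c)\in\Delta=B(c,r/2)\cap I$, the triangle inequality gives $\Delta\subseteq B(f^{n}(c),r)\cap I$, so for every $j$ the pull-back of $B(f^{n}(c),r)\cap I$ by $f^{n-j}$ containing $f^{j}(c)$ contains the pull-back of $\Delta$ by $f^{n-j}$ containing $f^{j}(c)$; for $j=M_{D+1}-M_{k}$ the latter equals $\Delta^{(k)}$, which contains $c$. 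As $k$ ranges over $\{1,\dots,D+1\}$ these indices are distinct and lie in $\{0,\dots,n-1\}$, so the criticality of $f^{n}$ at $c$ with respect to $r$ is at least $D+1$, as desired.

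\textbf{Main obstacle.} The crux is the bookkeeping in the second implication: one must check carefully that the neighbourhoods $\Delta^{(k)}$ are genuine pull-backs of one another, so that the orbit of $c$ revisits $\Delta^{(k)}$ exactly at time $M_{D+1}-M_{k}$, and that these $D+1$ revisit times are distinct and smaller than $n$; the step through the inclusion $\Delta\subseteq B(f^{n}(c),r)\cap I$ is what lets one argue with the fixed small ball $\Delta$ rather than with the moving ball $B(f^{n}(c),r)\cap I$. A secondary point, to be settled within $\mathscr{A}$, is that it suffices to treat recurrent \emph{turning} points, so that the pull-backs produced in the second implication do contribute to criticality. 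In the first implication the only non-elementary ingredient is Lemma~\ref{l:pullstable}, used precisely to force $|W_{j'}|<\kappa$; this backward Lyapunov stability is exactly the feature absent, in such a simple form, from the complex setting, and it is what makes the interval proof short.
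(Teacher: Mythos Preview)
Your proof is correct and follows essentially the same route as the paper: in both directions you build the same chains of pull-backs containing a critical point, invoking Lemma~\ref{l:pullstable} to bound their lengths in the first implication and using recurrence to generate the nested pull-backs $W_i$ (your $\Delta^{(k)}$) in the second. Your contrapositive framing is logically equivalent to the paper's direct argument and proof by contradiction, and the $\Crit$ versus $\Crit'$ caveat you raise is a subtlety the paper itself glosses over.
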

\begin{proof}
First we assume that $f$ has no recurrent critical points. Then there exists $\delta_1>0$ such that for each $c\in \Crit(f)$ and each integer $n\geq 1$ we have $f^n(c) \not\in B(c, \delta_1).$ Reducing $\delta_1$ if necessary we assume that $$\inf\{|c_1-c_2|: c_1, c_2\in \Crit(f), c_1\neq c_2\}>\delta_1.$$  Let $\delta$ be the constant given by Lemma~\ref{l:pullstable} with $\kappa=\delta_1.$ It follows that for each $x$ in~$I$, each integer $m\geq 1$ and each $c\in \Crit(f)$, there is at most one~$j$ in $\{0, \cdots, m-1\}$ such that the pull-back $B(f^m(x), \delta)$ by $f^{m-j}$ containing $f^j(x)$ contains $c,$ since otherwise there is $k\geq 1$ such that $f^k(c)\in B(c, \delta_1).$ Hence, the criticality of~$f^n$ at~$x$ with respect to $\delta$ is at most $\#\Crit(f)$. This implies~$f$ is semi-hyperbolic.

Now let us assume $f$ is semi-hyperbolic. Then there exist constants $r>0$ and $D\geq 1$ such that for every $x$ in $I$ and each integer $n\geq 1$ the criticality of $f^n$ at $x$ with respect to $r$ is at most $D$.  Arguing by contradiction, assume there is a recurrent critical point $c.$  Then we can define inductively a sequence $(n_i)_{i=1}^{+\infty}$ of positive integers and a sequence $\{W_i\}_{i=0}^{+\infty}$ of open intervals such that $W_0=B(c, r/2)$, and for each $i\geq 1$ we have $c\in W_i$, $f^{n_i}(c)\in W_{i-1}$ and $W_i$ is  the pull-back of $W_{i-1}$ by $f^{n_i}$ containing $c.$  Put $S:=n_1+\cdots+n_{D+1}$. For each~$x\in W_{D+1}$, we have $B(c, r/2)\subset B(f^S(x), r)$. It follows that the criticality of~$f^S$ at~$x$ with respect to $r$ is at least $D+1$. We obtain a contradiction, and so~$f$ has no recurrent critical points.
\end{proof}

\subsection{Maximal entropy measure and the piecewise affine modal}\label{3and4}

The following lemma is a direct consequence of the compactness arguments, we omit the proof.
\begin{lemma}\label{positiveopenset}
 Let $\mu$ be a non-atomic Borel measure whose support is equal to $I.$ Then for each $\delta>0$ we have $\inf_{x\in I}\mu(B_I(x, \delta))>0.$
\end{lemma}


 \begin{lemma}\label{measureadj}
 Let $\mu$ be a non-atomic Borel measure whose support is equal to $I.$ Then~$\mu$ is doubling if and only if there exists $C>1$ such that for each $x$ in~$I$ and each $\eps>0$ with $x \pm \eps$ in~$I,$ we have $$C^{-1}\mu((x, x+\eps))\leq \mu((x-\eps, x))\leq C\mu((x, x+\eps)).$$
\end{lemma}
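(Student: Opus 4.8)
The plan is to prove the two implications separately, using Lemma~\ref{positiveopenset} to control the measure of symmetric intervals from below. Throughout, fix $x\in I$ and $\eps>0$ with $x\pm\eps$ in $I$, write $L=(x-\eps,x)$ and $R=(x,x+\eps)$, and note $L\cup R\cup\{x\}=(x-\eps,x+\eps)=B_I(x,\eps)$ (an interval, since $x\pm\eps\in I$) while $\mu(\{x\})=0$ by non-atomicity.

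First I would show the ``doubling $\Rightarrow$ comparability'' direction. Suppose $\mu$ is doubling with constants $C_*>0$, $r_*>0$. The one subtlety is that the doubling hypothesis is only available for $r<r_*$, so I would first dispose of large $\eps$: if $\eps\geq r_*/2$, then by Lemma~\ref{positiveopenset} both $\mu(L)$ and $\mu(R)$ are bounded below by $m_0:=\inf_{y\in I}\mu(B_I(y,r_*/4))>0$ (up to translating the center into the interior and shrinking, which one does carefully), while they are bounded above by $\mu(I)$; hence they are comparable with a constant depending only on $m_0$ and $\mu(I)$. For small $\eps<r_*/2$, the idea is that $\mu(L)+\mu(R)=\mu(B_I(x,\eps))$ and $\mu(B_I(x,2\eps))\leq C_*\mu(B_I(x,\eps))$; since $B_I(x,\eps)\supset R$ and also $B_I(x,\eps)\supset L$, and since $R\subset B_I(x,\eps)\subset B_I(z,2\eps)$ for a suitable point $z$ near the right endpoint of $R$ with $L\subset B_I(z,\eps)$ — more precisely one centers a ball at a point so that one of the two intervals becomes ``half'' of a doubled ball — one gets $\mu(R)\leq\mu(B_I(x,\eps))\leq C_*\mu(B_I(z,\eps))$ and chooses $z$ so that $B_I(z,\eps)\subset L\cup R$ doesn't quite work; the cleaner route is: put $z=x-\eps/2$, so $B_I(z,\eps/2)\subset L$ and $B_I(z,\eps)\supset B_I(x,\eps/2)\ni$ part of $R$; iterating doubling a bounded number of times relates $\mu(R)$, which sits inside $B_I(z,2\eps)$, to $\mu(B_I(z,\eps/2))\leq\mu(L)$. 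Symmetrically one bounds $\mu(L)$ by $\mu(R)$. This chaining of a fixed number of doubling steps yields the constant $C$.

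For the converse ``comparability $\Rightarrow$ doubling'', suppose $C>1$ is as in the statement. Given $x\in I$ and $r>0$ small, I want $\mu(B_I(x,2r))\leq C_*\mu(B_I(x,r))$. The point is that $B_I(x,2r)$ is covered by a bounded number of intervals, each of the form $(y-r,y)$ or $(y,y+r)$ with $y-r$ or $y+r$ lying in $B_I(x,r)$, or one-sided intervals of length $\leq 2r$ touching the endpoints of $I$; by the hypothesis each such half-interval has $\mu$-measure at most $C$ times that of its mirror half inside $B_I(x,r)$, and hence at most $C\mu(B_I(x,r))$. Summing over the bounded covering gives $\mu(B_I(x,2r))\leq C_*\mu(B_I(x,r))$ with $C_*$ depending only on $C$ and the (absolute) number of pieces in the cover, valid for all $r<r_*:=|I|/2$ say (for larger $r$, $B_I(x,2r)=I$ and one again uses Lemma~\ref{positiveopenset} for the lower bound on $\mu(B_I(x,r))$).

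The main obstacle I anticipate is purely bookkeeping: matching up the radius windows where the doubling inequality is given versus where it is needed, and handling the endpoints of $I=[0,1]$ where a ``ball'' is one-sided. Both are handled uniformly by invoking Lemma~\ref{positiveopenset} to trade any finitely many ``bad scales'' for a harmless multiplicative constant, and by noting that near an endpoint a one-sided ball $B_I(x,r)$ already equals the corresponding half-interval, so the comparability hypothesis applies directly or is vacuous. I would present the small-scale argument first as the heart of the matter and relegate the large-scale and endpoint cases to short remarks.
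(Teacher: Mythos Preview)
Your proposal is correct and follows essentially the same route as the paper: for the forward direction you both recentre at $x\pm\eps/2$ and apply doubling twice (disposing of large $\eps$ via Lemma~\ref{positiveopenset}), and for the converse you both decompose $B_I(x,2r)$ and control the outer pieces by the comparability hypothesis. The paper's converse is a shade cleaner than your covering sketch: it splits $B_I(x,2r)$ into exactly three pieces $(x-2r,x-r)\cap I$, $B_I(x,r)$, $(x+r,x+2r)\cap I$, and handles the possible truncation by $\partial I$ by setting $s:=|(x-2r,x-r)\cap I|\le r$ and applying the hypothesis at the point $x-r$ with $\eps=s$, which gives the explicit constant $C_*=1+C$ valid for \emph{all} $r>0$ with no separate large-$r$ case.
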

\begin{proof}
Assume that $\mu$ is doubling with constants $C_*>1$ and $r_*>0$. For each $\eps>0$ and each $x$ in~$I$ with $x\pm \eps$ in $I$, we have $$(x, x+\eps)\subset B(x-\frac{\eps}{2}, 2\eps) \text{ and } (x-\eps, x)\subset B(x+\frac{\eps}{2}, 2\eps).$$ To prove the desired inequality, we consider the following two cases:

(i) If $\eps$ is in $(0,r_*].$  Then by the doubling property of $\mu$ we obtain $$\mu((x, x+\eps))\leq \mu(B_I(x-\frac{\eps}{2}, 2\eps))\leq C_*^2 \mu(B_I(x-\frac{\eps}{2}, \frac{\eps}{2}))=C_*^2 \mu((x-\eps, x)),$$
$$\mu((x-\eps, x))\leq \mu(B_I(x+\frac{\eps}{2}, 2\eps))\leq C_*^2 \mu(B_I(x+\frac{\eps}{2}, \frac{\eps}{2}))=C_*^2 \mu((x, x+\eps)).$$
This proves the desired inequality with $C_1:=C_*^2$.

(ii) If $\eps>r_*,$  put $$C_+:=\inf\{\mu(J): J\subset I \text{ is an interval with } |J|\geq r_*\}.$$ Then by Lemma~\ref{positiveopenset} we have $C_+>0.$ It follows that $$\mu((x, x+\eps))\leq 1\leq C_+^{-1}\mu((x-r_*, x))\leq  C_+^{-1}\mu((x-\eps, x))$$ and $$\mu((x-\eps, x))\leq 1\leq C_+^{-1}\mu((x, x+r_*))\leq  C_+^{-1}\mu((x, x+\eps)).$$
This proves again the desired inequality with $C:=C_+^{-1}$.

To prove the converse statement, fix $x$ in $I$ and $r>0.$ First we prove that
\begin{equation}\label{eqb}
\mu((x-2r, x-r)\cap I)\leq C\mu((x-r, x)\cap I).
\end{equation}
In fact, if $(x-2r, x-r)\cap I=\emptyset,$ then the inequality~(\ref{eqb}) is trivial. Otherwise,  put $s:=|(x-2r, x-r)\cap I|.$
Then $s\leq r$ and by the assumption with $x$ replaced by $x-r$ and $\eps$ replaced by $s,$ we have $$\mu((x-2r, x-r)\cap I)=\mu((x-r-s, x-r))\leq C\mu((x-r, x-r+s))\leq C\mu((x-r, x)\cap I).$$ The same argument gives us $$\mu((x, x+r)\cap I)\geq C^{-1}\mu((x+r, x+2r)\cap I).$$ Therefore, we have
\begin{multline*}
\mu(B_I(x,2r))=\mu(B(x,2r)\cap I)\\=\mu((x-2r, x-r)\cap I)+\mu([x-r, x+r]\cap I)+\mu((x+r, x+2r)\cap I)\\ \leq C\mu((x-r, x)\cap I) +\mu([x-r, x+r]\cap I) +C \mu((x, x+r)\cap I)\\\leq (1+C)\mu(B(x, r)\cap I)=(1+C)\mu(B_I(x, r)),
\end{multline*}
and proves $\mu$ is doubling with $C_*=1+C.$
\end{proof}

As noted before, the following well-known lemma is used several times throughout the rest of this article, see for example~\cite{Hof79,Hof81, MilThu88}.
\begin{lemma}\label{l:maxmeasure}
Let $f:I \to I$ be a multimodal   interval map that is topologically exact. Then there is a unique maximal entropy measure $\mu_f$ of $f$. Moreover, the topological support of $\mu_f$ is equal to $I$, $\mu_f$ is non-atomic and the Jacobian of $\mu_f$ is constant equal to $\exp(h_{\tp}(f)).$
\end{lemma}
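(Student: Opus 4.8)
The plan is to deduce the lemma from the Milnor--Thurston model of $f$ together with Hofbauer's uniqueness theorem. First I would record that $f$, being multimodal, is piecewise monotone and, being non-injective on an interval, has at least one turning point, and that topological exactness forces $h_{\tp}(f) =: \lambda > 0$: since $f^n(U) = I$ for every open $U$, the set $\bigcup_{n \geq 0} f^{-n}(\Crit'(f))$ is dense in $I$, so every open subinterval contains a point at which some iterate of $f$ is not locally injective, whence the number of monotone laps of $f^n$ grows exponentially. By the kneading theory of Milnor and Thurston~\cite{MilThu88} there are a monotone continuous surjection $\pi : I \to I$ and a piecewise affine map $g : I \to I$ each of whose laps has slope $\pm e^{\lambda}$, with $\pi \circ f = g \circ \pi$. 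Since $\pi$ can fail to be injective only on an interval on which every iterate of $f$ is monotone, and the density just noted rules out any such interval, $\pi$ is a homeomorphism; thus $f$ is topologically conjugate via $\pi$ to $g$, and $g$ is itself topologically exact.

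Next I would identify the maximal entropy measure of $g$ with Lebesgue measure $m$ on $I$: as $g$ is piecewise affine with $|g'| \equiv e^{\lambda}$, the measure $m$ is $g$-invariant with constant Jacobian $e^{\lambda}$, and Rokhlin's formula gives $h_m(g) = \int \log|g'|\, dm = \lambda = h_{\tp}(g)$, so $m$ is a maximal entropy measure of $g$. Transporting back, $\mu_f := (\pi^{-1})_* m$ is an $f$-invariant Borel probability measure with $h_{\mu_f}(f) = h_m(g) = \lambda = h_{\tp}(f)$, hence a maximal entropy measure of $f$; because $\pi$ is a homeomorphism and $m$ has full support and no atoms, so does $\mu_f$. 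For the Jacobian: if $A$ is a Borel set on which $f$ is injective, then $g$ is injective on $\pi(A)$ and $\mu_f(f(A)) = m(g(\pi(A))) = e^{\lambda} m(\pi(A)) = e^{\lambda} \mu_f(A)$, so the Jacobian of $\mu_f$ is constant equal to $\exp(h_{\tp}(f))$.

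It remains to address uniqueness, which I expect to be the only real obstacle: given any maximal entropy measure $\nu$ of $f$, the measure $\pi_* \nu$ is $g$-invariant with entropy $\lambda$, so it suffices to know that a topologically transitive (here topologically exact) piecewise affine map of constant slope $e^{\lambda}$ with $\lambda > 0$ has a unique maximal entropy measure. This is Hofbauer's theorem~\cite{Hof79,Hof81}, obtained by lifting to the Hofbauer tower, where the lifted measure is a maximal entropy measure of an exact countable-state topological Markov chain and is unique by Gurevich-type arguments; then $\pi_* \nu = m$ and $\nu = \mu_f$. Everything else in the lemma — existence, full support, non-atomicity, and the value of the Jacobian — is a routine transport of structure through the conjugacy $\pi$; a secondary point deserving care is checking that the Milnor--Thurston semiconjugacy is genuinely a homeomorphism, which is exactly where topological exactness enters.
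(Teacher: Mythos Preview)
The paper does not supply its own proof of this lemma; it simply records it as well known and cites \cite{Hof79,Hof81,MilThu88}. Your outline is therefore in the right spirit, and most of the steps---absence of homtervals under topological exactness, the Milnor--Thurston semiconjugacy upgrading to a conjugacy, transport of support and atoms, the Jacobian computation, and the appeal to Hofbauer for uniqueness---are correct.

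There is, however, a genuine gap. You assert that Lebesgue measure $m$ is $g$-invariant ``as $g$ is piecewise affine with $|g'|\equiv e^{\lambda}$''. Constant slope alone does \emph{not} force Lebesgue to be invariant: for $s\in(\sqrt{2},2)$ the tent map $T_s$ restricted to its core $[T_s^2(1/2),T_s(1/2)]$ and affinely rescaled to $[0,1]$ is topologically exact, piecewise affine with slope $\pm s$, yet a generic point has either one or two preimages, so $m\circ g^{-1}\neq m$. Consequently Rokhlin's formula, which presupposes invariance, cannot be invoked at that stage, and the identification $\mu_f=(\pi^{-1})_*m$ is unjustified as written.

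What is true is that the \emph{specific} model produced by the Milnor--Thurston construction comes from the distribution function of a measure on $I$ that is simultaneously $f$-invariant and of constant Jacobian $e^{\lambda}$; pushed forward, this yields Lebesgue as a $g$-invariant measure. But establishing the existence of such an invariant constant-Jacobian measure for $f$ is precisely the substantive content of the lemma, and it is proved by Hofbauer via the Markov extension rather than being a by-product of constant slope. In other words, Hofbauer's input is not confined to the uniqueness step you flag at the end---it already underlies existence of the right measure. The cleanest repair is to cite \cite{Hof79,Hof81} directly for existence, uniqueness, full support, and the constant Jacobian of $\mu_f$, and then, if desired, read off the piecewise affine model as $F=h\circ f\circ h^{-1}$ with $h(x)=\mu_f([0,x])$ (exactly as the paper does later in Proposition~\ref{p:qs2doubling}); the Milnor--Thurston route is then unnecessary for the lemma itself.
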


\begin{proposition}\label{p:qs2doubling}
Let $f:I \to I$ be a multimodal map in $\sA$ that is topologically exact, and let $\mu_f$ be the maximal entropy measure given by Lemma~\ref{l:maxmeasure}. Then
The following statements are equivalent.
\begin{enumerate}[1.]
\item $f$ is quasi-symmetrically conjugate to a piecewise affine map from the interval $I=[0,1]$ to itself with slope equal to $\pm \exp (h_\tp(f))$ everywhere;
\item  The measure $\mu_f$ is doubling.
\end{enumerate}
\end{proposition}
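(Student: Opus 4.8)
The plan is to exhibit, in a canonical way, the piecewise affine model of $f$ and to identify the conjugacy with the cumulative distribution function of $\mu_f$. Concretely, let $\lambda := \exp(h_{\tp}(f))$ and define $h : I \to I$ by $h(x) := \mu_f([0,x])$. Since $\mu_f$ is non-atomic with full support (Lemma~\ref{l:maxmeasure}), $h$ is a homeomorphism of $I$. The map $g := h \circ f \circ h^{-1}$ is then a multimodal map on $I$, and the key computation is that $g$ has slope $\pm\lambda$ on each lap: because the Jacobian of $\mu_f$ is constant equal to $\lambda$, for a small interval $J$ on which $f$ is injective we have $\mu_f(f(J)) = \lambda\,\mu_f(J)$, which translates into $|h(f(J))| = \lambda |h(J)|$, i.e. $|g(h(J))| = \lambda |h(J)|$; letting $|J| \to 0$ shows $|Dg| \equiv \lambda$ wherever defined, so $g$ is the desired piecewise affine model. (One should note that a piecewise affine map with all slopes $\pm\lambda$ mapping $I$ onto itself and realizing the topological dynamics of $f$ is unique, by the Parry/Milnor--Thurston theory cited in the paper, so this $g$ is "the" model.)

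With this correspondence in hand, the equivalence becomes a statement purely about $h$: the map $g$ is conjugate to $f$ by a quasi-symmetric homeomorphism if and only if $h$ itself can be chosen quasi-symmetric, and $h$ quasi-symmetric is equivalent to $\mu_f$ doubling. For the direction $(2)\Rightarrow(1)$: assume $\mu_f$ is doubling. By Lemma~\ref{measureadj} there is $C>1$ with $C^{-1}\mu_f((x,x+\eps)) \le \mu_f((x-\eps,x)) \le C\mu_f((x,x+\eps))$ for all admissible $x,\eps$. Since $h(x+\eps)-h(x) = \mu_f((x,x+\eps])$ and $h(x)-h(x-\eps) = \mu_f((x-\eps,x])$ and $\mu_f$ is non-atomic, these two quantities are exactly $\mu_f((x,x+\eps))$ and $\mu_f((x-\eps,x))$, so the adjacent-interval inequality is literally the quasi-symmetry inequality for $h$ with the same constant $C$. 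Hence $h$ is quasi-symmetric, and it conjugates $f$ to the piecewise affine map $g$ with slope $\pm\lambda$.

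For the direction $(1)\Rightarrow(2)$: suppose $\varphi : I \to I$ is a quasi-symmetric homeomorphism with $\varphi \circ f \circ \varphi^{-1} = g_0$ a piecewise affine map with all slopes $\pm\lambda$. The piecewise affine map $g_0$ preserves the normalized Lebesgue measure (this is the standard fact: a piecewise linear, exact, Markov-type map with all slopes of modulus $\lambda = \exp(h_{\tp})$ has Lebesgue as its maximal entropy measure, again Parry/Milnor--Thurston), so $g_0$'s maximal entropy measure is Lebesgue and, pushing forward, $\mu_f = \varphi^{-1}_* (\mathrm{Leb})$ up to the usual identification; equivalently $h = \varphi$ in the normalization above (both are homeomorphisms sending $\mu_f$ to Lebesgue and fixing the endpoints — if $g_0$ is orientation-respecting at $0$; otherwise compose with $x\mapsto 1-x$). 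Therefore $h = \varphi$ (or its reflection) is quasi-symmetric, the adjacent-interval inequality for $\mu_f$ holds with the quasi-symmetry constant of $\varphi$, and Lemma~\ref{measureadj} gives that $\mu_f$ is doubling.

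The main obstacle I anticipate is not any single estimate but the bookkeeping around the normalization: making precise that "the" piecewise affine model with slopes $\pm\lambda$ is well-defined up to affine coordinate change, that it carries Lebesgue measure as its measure of maximal entropy, and that any quasi-symmetric conjugacy must therefore agree (up to the trivial reflection) with the distribution function $h$ of $\mu_f$ — in particular ruling out that a different, non-measure-theoretic quasi-symmetric conjugacy could exist. This is where one leans on the Parry and Milnor--Thurston theory cited in the paper; once that identification is granted, both implications reduce to the elementary translation between the quasi-symmetry of $h$ and the doubling property of $\mu_f$ furnished by Lemma~\ref{measureadj}, together with the slope computation using the constancy of the Jacobian. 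A minor secondary point to handle carefully is the boundary behavior at $0$ and $1$ (turning points mapping to endpoints, and admissibility of $x\pm\eps$), but this is routine given that $f$ is topologically exact and $\mu_f$ has full support.
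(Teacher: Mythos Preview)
Your proposal is correct and follows essentially the same route as the paper: define $h(x)=\mu_f([0,x])$, use the constant Jacobian of $\mu_f$ to see that $h$ conjugates $f$ to a piecewise affine map of slope $\pm\exp(h_{\tp}(f))$, identify any other conjugacy $\varphi$ with $h$ via the uniqueness of the maximal entropy measure, and then translate quasi-symmetry of $h$ into the doubling property of $\mu_f$ through Lemma~\ref{measureadj}. The only cosmetic difference is that, for the identification step, the paper pulls Lebesgue back by $\widetilde h^{-1}$, computes its Jacobian under $f$ to be $\exp(h_{\tp}(f))$, and invokes Rokhlin's entropy inequality to force $\mu=\mu_f$, whereas you phrase it as ``Lebesgue is the maximal entropy measure of the affine model'' and push forward; these are two sides of the same coin.
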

Recall that $\mu_f$ is doubling if there is $C_*>1$ and $r_*>0$ such that for each $x$ in~$I$ and each~$r$ in $(0, r_*]$ we have $$\mu_f(B_I(x, 2r))\leq C_* \mu_f(B_I(x, r)).$$
\begin{proof}[Proof of Proposition~\ref{p:qs2doubling}] Put $s:=\exp(h_\tp(f))$ and for each $x$ in $I$, put $h(x):= \mu_f([0, x]).$ Note that~$\mu_f$ is non-atomic and the support of~$\mu_f$ is equal to $I.$ Then we have that $h: I \to I$ is continuous bijective function, so a homeomorphism. Defining  $F:=h\circ f\circ h^{-1},$  we have $h\circ f= F\circ h,$ and $F$ maps $I$ to itself. Furthermore, a completely analogous arguments as that in the proof of~\cite[Theorem~7.4]{MilThu88} shows that the map $F:I \to I$ is piecewise affine with slope equal to $\pm s$ everywhere. Conversely, suppose there is an increasing homeomorphism $\widetilde{h}: I\to I$ and a piecewise linear function $\widetilde{F}:I \to I$ with slope equal to $\pm s$ everywhere such that $\widetilde{h}\circ f= \widetilde{F}\circ \widetilde{h}.$ Denote by $\text{Leb}$ the Lebegue  measure on~$I$ and put~$\mu:=(\widetilde{h}^{-1})_* \text{Leb}.$ Then we have $\mu(I)=1,$ and for every Borel set $A$ of $I$ on which $f$ is injective we have \begin{multline*}
\mu(f(A))=(\widetilde{h}^{-1})_*\text{Leb}(f(A))=\text{Leb}(\widetilde{h}(f(A)))=\text{Leb}(\widetilde{F}(\widetilde{h}(A)))\\=s |\widetilde{h}(A)|=s\text{Leb}(\widetilde{h}(A))=s (\widetilde{h}^{-1})_*\text{Leb}(A)=s\mu(A).
\end{multline*}
It follows that the Jacobian of $\mu$ satisfies~$\text{Jac} (\mu)= s=  \exp(h_\tp(f)).$
 By Rokhlin's entropy
inequality, see~\cite[\S 2.9]{PrzUrb10} or~\cite[\S 10]{Par69} for example, we have
$$h_\mu(f)\geq \int_I \log \text{Jac}(\mu)\ d\mu =\int_I h_\tp(f) \ d\mu=h_\tp(f).$$ This gives us $\mu$ is a maximal entropy measure of $f$. Hence, by Lemma~\ref{l:maxmeasure}, we have~$\mu=\mu_f.$ Furthermore,  by the definition of $\mu$ we have $$\mu([0,x])=\text{Leb}(\widetilde{h}([0,x]))=|\widetilde{h}(0)-\widetilde{h}(x)|.$$  It follows that  $$\widetilde{h}(x)=\mu([0,x])=\mu_f([0,x])=h(x)$$
Therefore, to end the proof, it is enough to prove that $h$ is quasi-symmetric if and only if $\mu_f$ is doubling.

By the definition we know that $h$ is quasi-symmetric if and only if there is $M>1$ such that for each $x\in I $ and $\eps>0$ with $x\pm \eps$ in $I$ we have $$M^{-1}\leq \frac{|h(x+\eps)-h(x)|}{|h(x)-h(x-\eps)|}\leq M.$$ This, by the definition of $h$, is equivalent to the following
\begin{equation}\label{e:doub}
M^{-1}\leq \frac{|\mu_f([0, x+\eps])-\mu_f([0, x])|}{|\mu_f([0,x])-\mu_f([0, x-\eps])|} =\frac{\mu_f((x, x+\eps])}{\mu_f((x-\eps, x])}\leq M.
 \end{equation}
 Note that $\mu_f$ is non-atomic. Then by Lemma~\ref{measureadj} we have that the equality~(\ref{e:doub}) above holds if and only if $\mu_f$ is doubling, and complete the proof.
\end{proof}

\section{Semi-hyperbolicity and doubling}\label{123} 
The main goal of this section is prove the following proposition which gives the implication $(1)\Rightarrow (3)$ of Theorem~\ref{thm:qs}.
\begin{proposition}\label{semi2doubl}
Let $f:I \to I$ be a multimodal map in $\sA$ that is topologically exact, and let $\mu_f$ be the maximal entropy measure given by Lemma~\ref{l:maxmeasure}.
Assume that $f$ is semi-hyperbolic, then the measure $\mu_f$ is doubling.
\end{proposition}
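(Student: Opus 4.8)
The plan is to prove that semi-hyperbolicity forces the maximal entropy measure $\mu_f$ to be doubling by translating the doubling property into a statement about the Jacobian of $\mu_f$ along pull-backs. Recall from Lemma~\ref{l:maxmeasure} that $\mu_f$ has constant Jacobian $s:=\exp(h_\tp(f))$, so for any interval $J$ on which $f^n$ is injective we have $\mu_f(f^n(J)) = s^n \mu_f(J)$. The core idea is: given $x\in I$ and small $r>0$, to compare $\mu_f(B_I(x,2r))$ with $\mu_f(B_I(x,r))$ it suffices, by Lemma~\ref{measureadj}, to compare $\mu_f$ of two adjacent intervals of comparable length. The strategy is to choose a large scale $\rho>0$ (fixed once and for all, small enough that Koebe-type distortion control and semi-hyperbolicity are available at scale $\rho$) and, for a given small pair of adjacent intervals near $x$, to pull them forward by an iterate $f^n$ chosen so that the image has definite size (between two fixed constants depending only on $\rho$). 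One then needs that along this forward iteration the distortion of $\mu_f$ on the relevant pull-back is bounded, which is exactly where semi-hyperbolicity and the bounded-criticality hypothesis enter.

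The key steps, in order, would be: (1) Fix $\rho>0$ and use Lemma~\ref{l:pullstable} to get a companion scale $\delta>0$ so that every pull-back of a $\delta$-ball under any iterate has length at most $\rho$; also fix $\rho$ small relative to the minimal gap between critical points. (2) Given $x\in I$ and $r$ small, consider the first moment $n$ at which some pull-back chain starting from a neighbourhood of $x$ of radius $\asymp r$ grows to size $\asymp \delta$ — equivalently, run the iteration backwards from a $\delta$-scale ball to capture $B_I(x,r)$. Here I would use the standard ``shortest return/shadowing'' construction, analogous to the one in the proof of Lemma~\ref{l:sei2nonre}, to produce a sequence of pull-backs $W_0\supset$ ... that eventually reaches macroscopic size. (3) On this pull-back chain, semi-hyperbolicity guarantees that at most $D$ of the intermediate intervals meet $\Crit'(f)$, so $f^n$ restricted to the chain is a composition of at most $D$ ``folding'' maps and a number of diffeomorphic maps with uniformly bounded distortion (Koebe). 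One controls the distortion of $\mu_f$ across the non-critical blocks directly from $\mathrm{Jac}(\mu_f)\equiv s$ together with bounded Koebe distortion of the underlying coordinate; across each of the $\leq D$ critical blocks one uses the non-flatness: near a critical point $c$, $|\psi\circ f|=|\phi|^{\ell_c}$, and $\mu_f$ of a small interval on one side of $c$ is comparable (up to a constant depending on $\ell_c$ and the $C^3$ diffeos $\phi,\psi$) to $\mu_f$ of the mirror interval. (4) Combining these estimates yields a bound $\mu_f(B_I(x,2r))\leq C_*\,\mu_f(B_I(x,r))$ with $C_*$ depending only on $D$, $\ell_{\max}$, the Koebe constant, and $\rho$ — uniform in $x$ and in small $r$ — which is precisely the doubling property.

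The main obstacle — and the place where the interval setting diverges from the complex-analytic arguments of \cite{CarJonYoc94, Riv10} — is that interval maps are not open, so a pull-back of a ball need not contain a ball of comparable size around the preimage point, and the geometry of pull-backs near turning points is genuinely one-sided. Concretely, controlling the $\mu_f$-measure of the two adjacent intervals $(x-\eps,x)$ and $(x,x+\eps)$ after passing through a fold requires relating $\mu_f$ on the two sides of a turning point, and one must ensure the pull-back chain can be set up so that $x$ does not sit too close to the boundary of the macroscopic-scale interval (otherwise the comparison degenerates). I expect handling this boundary/one-sidedness issue — choosing the stopping time $n$ and the base interval $W_0$ so that $f^n(x)$ lands well inside $W_0$ with definite margins, and propagating that margin back through the chain — to be the technically delicate part; once the chain is correctly normalized, the distortion bookkeeping through the $\leq D$ critical blocks and the Koebe-controlled diffeomorphic blocks is essentially routine. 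I would also need the (standard) fact that $\mu_f$ gives positive mass to every interval of a fixed length (Lemma~\ref{positiveopenset}) to handle the case $r$ not small, reducing everything to the small-$r$ regime.
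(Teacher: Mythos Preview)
Your overall strategy---iterate forward until the image has definite size, use semi-hyperbolicity to bound the number of critical encounters by~$D$, and then compare measures at the macroscopic scale via Lemma~\ref{positiveopenset}---is exactly what the paper does. The paper's execution is cleaner in two respects, and the second of these addresses what looks like a genuine circularity in your step~(3).

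First, the paper works directly with the nested balls $B_I(x,r)\subset B_I(x,2r)$ rather than reducing via Lemma~\ref{measureadj} to adjacent intervals; this completely avoids the one-sidedness issue you flag. Second, and more importantly, the paper \emph{separates} the two kinds of distortion control. For the $\mu_f$-measure, the constant Jacobian together with the fact that each fold is at most $2$-to-$1$ gives immediately
\[
\exp(-m h_\tp(f))\,\mu_f(f^m(W))\ \le\ \mu_f(W)\ \le\ 2^D\exp(-m h_\tp(f))\,\mu_f(f^m(W))
\]
for any pull-back $W$ with criticality at most $D$ (this is Lemma~\ref{l:pul}, and it needs no non-flatness at all). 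Non-flatness enters only in the \emph{length} estimate (Lemma~\ref{impkoebe}), which is used solely to guarantee that $|f^m(B_I(x,r))|$ is bounded below by a fixed constant once $|f^m(B_I(x,2r))|$ is; then Lemma~\ref{positiveopenset} converts this into a lower bound on $\mu_f(f^m(B_I(x,r)))$, and the doubling inequality follows.

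Your step~(3) conflates these two roles. The assertion that ``$\mu_f$ of a small interval on one side of~$c$ is comparable (up to a constant depending on~$\ell_c$ and the $C^3$ diffeos $\phi,\psi$) to $\mu_f$ of the mirror interval'' is not justified as stated: non-flatness tells you the \emph{lengths} of $f([c-\eps,c])$ and $f([c,c+\eps])$ are comparable, but converting this into a comparison of $\mu_f$-masses of $[c-\eps,c]$ and $[c,c+\eps]$ would require knowing that $\mu_f([c-K\eps,c])\le C\,\mu_f([c-\eps,c])$ for the relevant constant~$K$---which is precisely a local doubling statement. Replace this by the trivial observation that through a fold the $\mu_f$-Jacobian lies in $[s/2,s]$, and keep non-flatness for the length bookkeeping only; then your outline goes through and coincides with the paper's proof.
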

The proof of this proposition, depending on several lemmas, is given at the end of this section.

Let~$\tau > 0$ and let $J_1 \subset J_2$ be two intervals of $I$. We say that $J_1$ is \emph{$\tau$-well inside~$J_2$,} if both components of $J_2\setminus J_1$ have length at least~$\tau|J|$.

\begin{lemma}[Theorem~A, \cite{LiShe10a}]\label{koebe}
 Let $f:I \to I$ be a multimodal map in $\sA$. Then for each $\tau > 0$ there exist
$C>1$ and $\xi>0$ satisfying the following. Let $T\subset I$ be an open
interval, $J$ a closed subinterval of $T$  and an integer $s\geq 1$ such that the following hold:
\begin{enumerate}[1.]
\item $f^s: T \to f^s(T)$ is a diffeomorphism;
\item $|f^s(T)|\leq \xi$;
\item $f^s(J)$ is $\tau$-well inside $f^s(T).$
\end{enumerate}
Then for each pair $x$ and $y$ of points in $J$ we have
$$\frac{|Df^s(x)|}{|Df^s(y)|}\leq C.$$
Furthermore, $C \to 1$  as $\tau \to +\infty.$
\end{lemma}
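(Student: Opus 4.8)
The statement to prove is the Koebe-type distortion Lemma~\ref{koebe}, which is quoted from \cite{LiShe10a}. Let me sketch a proof.

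The proof proposal below explains how I would approach proving this quoted Koebe distortion lemma.

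\medskip

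The plan is to reduce the statement to the classical (negative Schwarzian / $C^2$) Koebe principle by means of a change of coordinates that makes an appropriate extension of $f^s$ have no critical points and bounded distortion of a controlled type. The first step is to note that, since $f^s : T \to f^s(T)$ is a diffeomorphism, the interval $T$ contains no critical point of $f$, so on $T$ the composition $f^s = f \circ f \circ \cdots \circ f$ is a genuine $C^3$ diffeomorphism onto its image, and the obstruction to applying the usual real Koebe lemma is only that $f$ itself need not have negative Schwarzian derivative. The standard device here (going back to work of van Strien, Kozlovski, and others, and used systematically in \cite{LiShe10a}) is that a $C^3$ map with non-flat critical points admits, on any interval bounded away from $\Crit(f)$ up to a definite scale, a reparametrization after which the relevant iterate has \emph{nonpositive Schwarzian derivative}, or at least satisfies a Koebe inequality with a universal constant; alternatively one invokes the Koebe principle for maps with bounded distortion of the derivative (the "$C^2$ Koebe lemma" of de Melo--van Strien, Chapter~IV). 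Concretely, I would fix the scale $\xi$ small enough (depending on $f$) that whenever $|f^s(T)| \le \xi$, each intermediate image $f^j(T)$, $0 \le j \le s$, either is short or is disjoint from a fixed neighborhood of the critical set — this is exactly the kind of control provided by Lemma~\ref{l:pullstable} together with the finiteness of $\Crit(f)$ — so that the nonlinearity contributed by each step is summable.

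The heart of the argument is then the following: along the orbit $T = T_0, T_1 = f(T_0), \ldots, T_s = f^s(T_0)$, one estimates $\sum_{j=0}^{s-1} |T_j|$. Because $f^s(J)$ is $\tau$-well inside $f^s(T)$ and $f^s|_T$ is a diffeomorphism, the pull-backs $J = J_0, \ldots, J_s$ satisfy that each $J_j$ is (after possibly shrinking $\xi$ and using the non-flatness exponents $\ell_{\max}$) comparably well inside $T_j$; combined with the fact that the total length $\sum_j |T_j|$ is bounded — which follows from the contraction/disjointness statement, since intervals mapped diffeomorphically with images of total bounded measure cannot have large total length once we are below the scale $\xi$ — one gets that the accumulated logarithmic distortion $\sum_j \log \frac{|Df(x_j)|}{|Df(y_j)|}$ over $x, y \in J$ is bounded by a constant times $\sum_j |T_j|$ times a modulus-of-continuity factor for $\log|Df|$ away from $\Crit(f)$. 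The $C^3$ hypothesis and non-flatness enter precisely to guarantee that $\log |Df|$ is Lipschitz (indeed $C^2$) on the complement of any fixed neighborhood of $\Crit(f)$, and that near a critical point the reparametrized map is a perturbation of $x \mapsto x^{\ell_c}$ for which the Koebe inequality is explicit. Finally, the assertion $C \to 1$ as $\tau \to +\infty$ comes from the classical Koebe estimate: the distortion bound one obtains is of the shape $\bigl(1 + \frac{1}{\tau}\bigr)^{2}$-type times a fixed summable contribution that itself tends to $1$ because, as $\tau \to \infty$, $J$ occupies a smaller and smaller fraction of $T$, so $\sum_j |J_j| \to 0$.

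I expect the main obstacle to be the reparametrization/Schwarzian step: making precise the statement that, below scale $\xi$ and in the presence of several non-flat critical points of possibly different exponents, the iterate $f^s|_T$ has distortion controlled by a \emph{universal} constant $C$ (not depending on $s$), together with the cleanest route to the "$C \to 1$ as $\tau \to \infty$" refinement. In the interest of brevity, rather than reproducing this machinery, I will simply quote it: the statement is \cite[Theorem~A]{LiShe10a}, whose proof combines the Koebe principle for $C^2$ maps with the cross-ratio (Schwarzian) estimates for $C^3$ maps with non-flat critical points developed there and in earlier work of Kozlovski and van~Strien. Thus no separate proof is given here; we record Lemma~\ref{koebe} as an external input and proceed to use it in the sequel.
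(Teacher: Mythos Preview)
Your proposal is in line with the paper: Lemma~\ref{koebe} is stated there without proof, as a direct quotation of \cite[Theorem~A]{LiShe10a}, and you likewise conclude by deferring to that reference. The sketch you give of the underlying mechanism (summable nonlinearity along the orbit, the $C^3$/non-flat Koebe machinery, and the $\tau\to\infty$ refinement) is reasonable background but not required, since the paper treats this lemma purely as an external input.
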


A \emph{chain} is a sequence of open intervals $\{G_i\}^s_{i=0}$ such that for each $0\leq i < s$, $G_i$ is a
component of $f^{-1}(G_{i+1})$. The \emph{criticality} of the chain is the number of $i'$s such that $G_i$ contains a
critical point.

The following is a version of the Koebe principle for non-diffeomophic pull-backs, see for example~\cite{LiShe08b} for a proof.
\begin{lemma}[Lemma~4.1, \cite{LiShe08b}] \label{improvedkoebe}
Let $f:I \to I$ be a multimodal map in $\sA$, then there exists $\eta > 0$ such that for each $\tau>0$ and $N\geq 1$ there are $\tau'>0$ and $C>0$ with
the following property. Let $\{H_j \}^s_{j=0}$ and $\{H'_j \}^s_{j=0}$ be chains such that $H'_j\subset H_j$ for all $0\leq  j\leq s.$
Assume that $|H_s|<\eta,$ $H'_s$ is $\tau$-well inside $H_s$ and that the criticality  of the chain $\{H_j\}^s_{j=0}$ is at most $N.$ Then the following holds:
\begin{enumerate}[1.]
\item $H'_0$ is $\tau'$-well inside in $H_0;$
\item for each $x\in H'_0$, $$|Df^s(x)|\leq C \frac{H'_s}{H'_0},$$
\end{enumerate}
Moreover, for a fixed $N,$
$\log \tau'
/ \log \tau$ tends to a positive constant as $\tau \to +\infty.$
\end{lemma}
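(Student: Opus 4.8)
The plan is to prove the lemma by induction on the criticality $N$ of the chain $\{H_j\}_{j=0}^s$, using the diffeomorphic Koebe principle (Lemma~\ref{koebe}) on the maximal stretches of the chain that contain no critical point, and the non-flatness hypothesis to treat each single step at which a critical point is met. First I would fix $\eta$ no larger than the constant $\xi$ of Lemma~\ref{koebe} (applied with a fixed reference value of $\tau$), and small enough that every interval of the form $f([a,b])$, where $[a,b]$ is a pull-back interval meeting $\Crit(f)$ and is microscopic, lies inside a fixed neighborhood $U_c$ of the relevant critical point $c$ on which the normal form $|\psi\circ f|=|\phi|^{\ell_c}$ holds with $\phi,\psi$ of class $C^3$. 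On such a fixed neighborhood $\phi$ and $\psi$ have uniformly bounded distortion, so a step $f\colon H_j\to H_{j+1}$ at a critical index behaves, up to uniformly bounded multiplicative errors in length ratios and in $|Df|$, like the pure power map $w\mapsto|w|^{\ell_c}$.

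For the base case $N=0$ the map $f^s\colon H_0\to H_s$ is a diffeomorphism; since $|H_s|<\eta\le\xi$ and $f^s(H'_0)=H'_s$ is $\tau$-well inside $f^s(H_0)=H_s$, Lemma~\ref{koebe} gives a distortion bound $C=C(\tau)$ on $H'_0$ with $C(\tau)\to1$ as $\tau\to+\infty$. Pulling back the two components of $H_s\setminus H'_s$ through $f^s$ gives conclusion~(1) with $\tau'\asymp\tau$, hence $\log\tau'/\log\tau\to1$; and the mean value theorem together with the distortion bound gives, for each $x\in H'_0$, $|Df^s(x)|\le C(\tau)|H'_s|/|H'_0|$, which is conclusion~(2).

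For the inductive step let $j^*$ be the largest index with $H_{j^*}\cap\Crit(f)\ne\emptyset$, and factor $f^s=f^{\,s-j^*-1}\circ f\circ f^{\,j^*}$. The tail $f^{\,s-j^*-1}\colon H_{j^*+1}\to H_s$ is diffeomorphic, the middle step $f\colon H_{j^*}\to H_{j^*+1}$ passes through the critical point(s) of $H_{j^*}$, and the head $f^{\,j^*}\colon H_0\to H_{j^*}$ carries a chain of criticality $N-1$. Applying the base case to the tail, then the power-map analysis on the normal-form neighborhood to the middle step — distinguishing whether $H'_{j^*}$ straddles the critical point or lies to one side of it; note that, on the critical-value side, the straddling case is incompatible with real space surviving forward to level $s$, and in either case a one-variable computation with $w\mapsto|w|^{\ell_c}$ bounds the change of the well-inside constant and contributes a factor at most $2\ell_{\max}$ times the distortion of $\phi,\psi$ to the derivative — then the inductive hypothesis to the head, and multiplying the three length-ratio estimates (which telescope to $|H'_s|/|H'_0|$) yields~(2) with a constant $C$ depending only on $\tau$, $N$ and $f$. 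For~(1) I track the well-inside constant through the three stages: essentially unchanged by the diffeomorphic tail for $\tau$ large, replaced by a constant $\asymp\tau^{1/\ell_c}$ by the critical step, and then by the constant supplied by the inductive hypothesis for criticality $N-1$; composing, $\log\tau'/\log\tau$ tends, as $\tau\to+\infty$, to a positive limit of the shape $\prod_i\ell_{c_i}^{-1}\ge\ell_{\max}^{-N}$.

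The main obstacle is the quantitative bookkeeping rather than the conceptual scheme: (a) choosing $\eta$ and organising the chain so that every critical step genuinely takes place inside a normal-form neighborhood — equivalently, controlling the macroscopic pieces of the chain, where a compactness argument (finiteness of the relevant ``shapes'') must replace the local normal form; and (b) verifying that the critical step degrades the well-inside constant exactly like a fixed power of $\tau$, uniformly in the position of $H'_{j^*}$ relative to the critical point, so that $\lim_{\tau\to+\infty}\log\tau'/\log\tau$ exists and is strictly positive. These two points are precisely where the $C^3$ non-flatness and the refined form of Lemma~\ref{koebe} (with ``$C\to1$ as $\tau\to+\infty$'') are essential.
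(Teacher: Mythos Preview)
The paper does not supply its own proof of this lemma: immediately before the statement it writes ``see for example~\cite{LiShe08b} for a proof'' and then only \emph{uses} the result. So there is nothing in the paper to compare your argument against line by line.

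That said, your outline is the standard one and matches the scheme in the cited reference: induct on the criticality~$N$, apply the diffeomorphic Koebe principle (Lemma~\ref{koebe}) on each maximal critical-free stretch, and use the non-flat normal form $|\psi\circ f|=|\phi|^{\ell_c}$ at each critical step to see that the well-inside constant degrades like $\tau\mapsto\tau^{1/\ell_c}$ while the derivative picks up a bounded multiplicative factor. Your identification of the two delicate points --- choosing $\eta$ so that every critical step genuinely lives inside a normal-form neighborhood (via Lemma~\ref{l:pullstable}), and checking the power-law degradation uniformly in the position of $H'_{j^*}$ relative to the critical point --- is accurate, and these are exactly where the work lies in~\cite{LiShe08b}. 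One small remark: your parenthetical about the ``straddling case'' being ``incompatible with real space surviving forward'' is imprecise as stated; the correct observation is that when $c\in H'_{j^*}$ the image $f(H'_{j^*})$ has $f(c)$ as an endpoint, so the well-inside estimate on the critical-value side of $H_{j^*+1}$ is vacuous there and only the other side of $H_{j^*}\setminus H'_{j^*}$ needs the power-law bound. With that adjustment the sketch is sound.
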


\begin{lemma}\label{impkoebe}
Let $f:I \to I$ be a multimodal map in $\sA$. Then for each $N\geq 1$ and $\tau>0$ there exist $\delta'>0$ and $C_1>1$ such that the following holds. Let $J_0\subset \hJ_0$ be two subintervals of $I$, and let~$m\geq 1$ be an integer. If $(1+2\tau)|f^m(\hJ_0)|<\delta', \tau f^m(\hJ_0)\subset I$ and the number of $i'$s such that the pull-back of $\tau f^m(\hJ_0)$ by $f^{m-i}$ containing $f^i(\hJ_0)$ intersects $\Crit(f)$ is at most $N,$ then we have $$C_1^{-1}\left(\frac{|f^m(\hJ_0)|}{|f^m(J_0)|}\right)^{\ell_{\max}^{-N}}\leq \frac{|\hJ_0|}{|J_0|}\leq C_1 \frac{|f^m(\hJ_0)|}{|f^m(J_0)|}.$$
\end{lemma}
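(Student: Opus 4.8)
The plan is to derive Lemma~\ref{impkoebe} from the two Koebe-type results already stated, namely Lemma~\ref{koebe} (the Koebe principle for diffeomorphic pull-backs with real Koebe space) and Lemma~\ref{improvedkoebe} (the version for pull-backs with bounded criticality). The right-hand inequality $|\hJ_0|/|J_0|\leq C_1|f^m(\hJ_0)|/|f^m(J_0)|$ is the ``easy'' direction: it says bounded distortion, and it follows by applying part~(2) of Lemma~\ref{improvedkoebe} to the chain obtained by pulling back $\tau f^m(\hJ_0)$ along the orbit of $\hJ_0$, once we check the hypotheses. The left-hand inequality carries the exponent $\ell_{\max}^{-N}$ and reflects the fact that each time the pull-back meets a critical point the comparison of lengths can only deteriorate by a power controlled by $\ell_c\leq \ell_{\max}$; this is the substantive part.

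First I would set up notation: let $\hJ_i$ be the pull-back of $\tau f^m(\hJ_0)$ by $f^{m-i}$ containing $f^i(\hJ_0)$, so $\{\hJ_i\}_{i=0}^m$ is a chain of criticality at most $N$ by hypothesis, with $|\hJ_m|=(1+2\tau)|f^m(\hJ_0)|<\delta'$ and with $f^m(\hJ_0)$ being $\tau$-well inside $\hJ_m$ by construction. Let $J_i$ be the corresponding pull-back of $f^m(J_0)$ sitting inside $\hJ_i$, so $\{J_i\}_{i=0}^m$ is a subchain of $\{\hJ_i\}$. Taking $\eta>0$ from Lemma~\ref{improvedkoebe} and requiring $\delta'<\eta$, part~(2) of that lemma applied at any $x\in J_0$ gives $|Df^m(x)|\leq C|J_m|/|J_0|=C|f^m(J_0)|/|J_0|$, and integrating over $J_0$ yields $|f^m(J_0)|\leq C|J_0|\cdot |f^m(J_0)|/|J_0|$ — that is a tautology, so instead I would integrate more carefully: $|f^m(J_0)|=\int_{J_0}|Df^m|\leq C\frac{|f^m(J_0)|}{|J_0|}|J_0|$ is vacuous, so the correct move is to compare $|\hJ_0|$ and $|J_0|$ directly. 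Since $\tau f^m(\hJ_0)$ is a definite neighborhood of $f^m(\hJ_0)$, I can apply Lemma~\ref{improvedkoebe} with the pair of chains $\{\hJ_i\}$ (ambient) and $\{$pull-back of $f^m(\hJ_0)\}$ (inner, $\tau$-well inside at level $m$) to get that the pull-back of $f^m(\hJ_0)$ at level $0$, call it $\hJ_0'$, is $\tau'$-well inside $\hJ_0$, and then apply Lemma~\ref{koebe} on each diffeomorphic piece, or better, directly use the bound $|Df^m|$ has bounded ratio on $\hJ_0'$ to conclude the right inequality. The left inequality then comes from the opposite bound in Lemma~\ref{improvedkoebe}(2), $|Df^m(x)|\geq c|\hJ_m|/|\hJ_0|$-type estimate, combined with the power-law distortion near critical points: where the orbit of $\hJ_0$ passes a critical point $c$, the map behaves like $z\mapsto z^{\ell_c}$, so a ratio $\rho<1$ of lengths becomes $\rho^{\ell_c}\geq \rho^{\ell_{\max}}$; chaining at most $N$ such passages gives the exponent $\ell_{\max}^{-N}$ (or $\ell_{\max}^{N}$ depending on direction) and the constant $C_1$ absorbs the non-flatness diffeomorphisms $\phi,\psi$ and the finitely many bounded-distortion factors from Lemma~\ref{koebe}.

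The main obstacle I anticipate is bookkeeping the power-law loss cleanly: one must track, passage by critical passage, how the ratio $|f^i(\hJ_0)|/|f^i(J_0)|$ transforms, using the non-flatness normal form $|\psi\circ f|=|\phi|^{\ell_c}$ together with bounded distortion of $\phi,\psi$ on the relevant scale (which is why we need $\delta'$ small, so that $f^m(\hJ_0)$ and all its pullbacks that meet $\Crit(f)$ lie in the neighborhoods where the normal form holds and $\phi,\psi$ have bounded distortion). Between consecutive critical passages the map is a diffeomorphism onto its image with a definite Koebe neighborhood inherited from $\tau f^m(\hJ_0)$ — here one needs Lemma~\ref{improvedkoebe}(1) to propagate a definite ``well-inside'' constant $\tau'$ backward past each critical point, so that Lemma~\ref{koebe} applies on each diffeomorphic segment with a uniform distortion constant. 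A secondary technical point is that intervals may hit the boundary of $I$; since the statement assumes $\tau f^m(\hJ_0)\subset I$, the top-level interval is interior, and backward images of interior intervals stay interior, so this is not actually an issue here. I would choose $\delta'$ smaller than $\eta$ from Lemma~\ref{improvedkoebe}, smaller than the $\xi$ from Lemma~\ref{koebe} associated to the constant $\tau'$, and small enough that any interval of that length meeting a critical point lies in the corresponding non-flatness chart; then $C_1$ is the product of the $\#\Crit(f)$-many distortion constants for the charts, $C$ from Lemma~\ref{koebe} at parameter $\tau'$, and $C$ from Lemma~\ref{improvedkoebe} at parameters $(\tau,N)$.
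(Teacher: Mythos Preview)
Your plan is correct and matches the paper's proof almost exactly: set up the chain $\{H_i\}_{i=0}^m$ with $H_m=\tau f^m(\hJ_0)$, use Lemma~\ref{improvedkoebe}(1) to get that $f^{n_i}(\hJ_0)$ is $\tau'$-well inside $H_{n_i}$ at every level (in particular at each critical time $n_1<\cdots<n_s$), apply Lemma~\ref{koebe} on each diffeomorphic segment $n_i+1\to n_{i+1}$ to keep the ratio $|f^{\,\cdot}(\hJ_0)|/|f^{\,\cdot}(J_0)|$ under control up to a factor $C'$, and at each critical time use the non-flatness normal form to compare $|f^{n_i}(\hJ_0)|/|f^{n_i}(J_0)|$ with $\bigl(|f^{n_i+1}(\hJ_0)|/|f^{n_i+1}(J_0)|\bigr)^{1/\ell_c}$; chaining yields $C_1=C'^{\,N+1}M^{2N/\ell_{\min}}$ and the exponent $\ell_{\max}^{-N}$ on the lower side.

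The one point you gloss over is \emph{why} taking $\delta'$ small forces every intermediate pullback $H_{n_i}$ (not just the top interval $H_m$) to sit inside the non-flatness chart around its critical point. Shrinking $\delta'$ controls only $|H_m|$; to bound all $|H_i|$ uniformly you need the backward Lyapunov stability of Lemma~\ref{l:pullstable}, which the paper invokes explicitly (choosing $\delta$ so that every pullback of an interval of length $<\delta$ has length $<\kappa$, where $\kappa$ is the size of the non-flatness neighborhoods). Without this, your sentence ``small enough that any interval of that length meeting a critical point lies in the corresponding non-flatness chart'' does not do the job, since the intervals $H_{n_i}$ meeting $\Crit(f)$ are not of length $\delta'$ but are pullbacks of such an interval. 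Add that citation and your argument is complete.
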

\begin{proof}
By the non-flatness of critical points, there are $\kappa>0$ and $M>0$ such that for every $c\in \Crit(f)$ and every interval $J$ contained in $B(c, \kappa)$ we have
\begin{equation}\label{nonflat}
 M^{-1}|J|^{\ell_c}\leq |f(J)|\leq M|J|^{\ell_c}
\end{equation}
Reducing $\kappa$ if necessary, we can assume that for any two distinct critical $c$ and $c'$ we have $|c-c'|>\kappa.$ By Lemma~\ref{l:pullstable} there is $\delta> 0$ such that for every interval $J$ of $I$ with $|J|<\delta$, every integer $n\geq 1$, every pull-back $W$ of $J$ by $f^n$ has length at most $\kappa.$  Let $\eta>0$, $\tau'>0$ and $C>0$ be the constants given by Lemma~\ref{improvedkoebe} with~$N$ and~$\tau$, and let $C'>1$ and $\xi>0$ be the constants given by Lemma~\ref{koebe} for $\tau'.$ Put $\delta':=\min\{\kappa, \delta, \xi, \eta\}.$ To prove the lemma, let $J_0\subset \hJ_0$ be two subintervals of $I$ and let $m\geq 1$  be an integer such that $(1+2\tau)|f^m(\hJ_0)|<\delta', \tau f^m(\hJ_0)\subset I$ and the number of $i'$s such that the pull-back of $\tau f^m(\hJ_0)$ by $f^{m-i}$ containing $f^i(\hJ_0)$ intersects $\Crit(f)$ is at most $N.$ Moreover,  let $\{H_i\}_{i=0}^{m}$ be the chain such that $H_m=\tau f^m(\hJ_0)$ and $H_0$ is the pull-back of $\tau f^m(J_2)$ by $f^m$ containing $J_2$. Let $0\leq n_1<n_2<\cdots<n_s<n_{s+1}=m$ be all the integers $i$ with $H_i\cap \Crit(f)\neq \emptyset.$ Then $s\leq N$, and by Lemma~\ref{improvedkoebe} for every $i$ in $\{1, \cdots, s+1\}$ we have $f^{n_i}(\hJ_0)$ is $\tau'$-well inside in $H_{n_i}.$ Therefore, by Lemma~\ref{koebe} for every $i$ in $\{1, \cdots, s\}$ we have $$C'\frac{|f^{n_{i+1}}(\hJ_0)|}{|f^{n_{i+1}}(J_0)|}\geq\frac{|f^{n_{i}+1}(\hJ_0)|}{|f^{n_{i}+1}(J_0)|}\geq \frac{1}{C'}\frac{|f^{n_{i+1}}(\hJ_0)|}{|f^{n_{i+1}}(J_0)|}$$  and  $$ C'\frac{|f^{n_{1}}(\hJ_0)|}{|f^{n_{1}}(J_0)|}\geq \frac{|\hJ_0|}{|J_0|}\geq \frac{1}{C'}\frac{|f^{n_{1}}(\hJ_0)|}{|f^{n_{1}}(J_0)|}.$$ On the other hand, by~(\ref{nonflat}) for every $i$ in $\{1, \cdots, s\}$ we have $$M^{2/\ell_{\min}}\frac{|f^{n_i+1}(\hJ_0)|}{|f^{n_i+1}(J_0)|} \geq \frac{|f^{n_i}(\hJ_0)|}{|f^{n_i}(J_0)|} \geq M^{-2/\ell_{\min}}\left(\frac{|f^{n_i+1}(\hJ_0)|}{|f^{n_i+1}(J_0)|}\right)^{1/\ell_{\max}}.$$ It follows that $$C'^{-(1+N)}M^{-\frac{2N}{\ell_{\min}}} \left(\frac{|f^m(\hJ_0)|}{|f^m(J_0)|}\right)^{\ell_{\max}^{-N}} \leq \frac{|\hJ_0|}{|J_0|}\leq C'^{(1+N)}M^{\frac{2N}{\ell_{\min}}} \frac{|f^m(\hJ_0)|}{|f^m(J_0)|}.$$ This proves the lemma with $C_1:=C'^{(1+N)}M^{\frac{2N}{\ell_{\min}}}.$
\end{proof}

\begin{proof}[Proof of Proposition~\ref{semi2doubl}]
Since $f$ is semi-hyperbolic, then there exist constants $r'>0$  and $D\geq 1$ such that for every $x$ in $I$ and each integer $n\geq 1$ the criticality of $f^n$ at $x$ with respect to $r'$ is at most $D$. Let $C_1>1$ and $\tau'>0$ be the constants given by Lemma~\ref{impkoebe} with $\tau=2$ and $N=D$. Put $M:=\sup_{I}|Df|$ and $r_0:=\min\{r', \tau'\}.$ Fix $x\in I$ and a sufficiently small $r>0$. Let $m$ be the minimal integer such that $|f^m(B_I(x, 2r))|\geq r_0/(6M).$ Since $f$ is topologically exact, such integer $m$ exists. Note that $|f^{m-1}(B_I(x, 2r))|<r_0/(6M),$ so $|f^m(B_I(x, 2r))|\leq r_0/6.$ By our choice of $r_0$ we know that the number of $i'$s such that the pull-back of $2 f^m(B_I(x,2r))$ by $f^{m-i}$ containing $f^i(B_I(x,2r))$ intersects $\Crit(f)$ is at most $D,$ and $$|(1+4)|f^m(B_I(x,2r))|<\tau'.$$ Therefore, by Lemma~\ref{impkoebe} we have  $$2\geq\frac{|B_I(x,2r)|}{|B_I(x,r)|}\geq C_1^{-1}\left(\frac{|f^m(B_I(x, 2r))|}{|f^m(B_I(x,r))|}\right)^{\ell_{\max}^{-D}}.$$ This implies $|f^m(B_I(x,r))|\geq (2C_1)^{\ell_{\max}^{-D}}(6M)^{-1} r_0.$  Moreover,  by Lemma~\ref{positiveopenset},  we have $$\delta':=\inf_{x\in I} \mu_f(B_I(x, (2C_1)^{\ell_{\max}^{-D}}(12M)^{-1} r_0 ))>0.$$
On the other hand, by Lemma~\ref{l:pul} we have $$\mu_f(B_I(x, r))\geq \exp(-m h_\tp(f))\mu_f(f^m(B_I(x,r)))$$ and $$\mu_f(B_I(x, 2r))\leq 2^D\exp(-m h_\tp(f))\mu_f(f^m(B_I(x, 2r))).$$ It follows that $$\frac{\mu_f(B_I(x, 2r))}{\mu_f(B_I(x, r))}\leq 2^D\frac{\mu_f(f^m(B_I(x, 2r)))}{\mu_f(f^m(B_I(x,r)))}.$$ Hence,  $$\mu_f(B_I(x, 2r))\leq 2^D\frac{\mu_f(f^m(B_I(x, 2r)))}{\mu_f(f^m(B_I(x,r)))}\mu_f(B_I(x,r))\leq \frac{2^D}{\delta'}\mu_f(B_I(x,r)).$$ This implies $\mu_f$ is doubling.
\end{proof}

\section{Doubling implies non-recurrent critical points}\label{322}
In this section, our main goal is to prove the following proposition. The proof,
which is given at the end of this section, depends on several lemmas.

\begin{proposition}\label{semidoubl}
Let $f:I \to I$ be a multimodal map in $\sA$ that is topologically exact, and let $\mu_f$ be the maximal entropy measure given by Lemma~\ref{l:maxmeasure}. Assume the  measure $\mu_f$ of $f$ is doubling, then $f$ has no recurrent critical points.
\end{proposition}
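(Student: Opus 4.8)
The plan is to argue by contradiction: suppose $\mu_f$ is doubling but $f$ has a recurrent critical point $c$. The guiding principle is that recurrence forces a ``deep'' return which, combined with non-flatness, squeezes $\mu_f$ on two adjacent intervals of wildly different mass ratios, contradicting the reformulation of doubling in Lemma~\ref{measureadj}. Concretely, since $c$ is recurrent I would first produce, using topological exactness and backward Lyapunov stability (Lemma~\ref{l:pullstable}) in the spirit of the construction in the proof of Lemma~\ref{l:sei2nonre}, an infinite sequence of pull-backs along close returns of the orbit of $c$ to itself: intervals $W_i \ni c$ with $f^{n_i}(W_i) = W_{i-1}$, whose criticality is unbounded as $i \to \infty$ because the return times accumulate critical passages near $c$. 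This is exactly where recurrence (as opposed to mere non-semihyperbolicity) is used, and it mirrors how unbounded criticality was extracted at the end of Lemma~\ref{l:sei2nonre}.

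Next I would transfer this unbounded criticality into a distortion estimate for $\mu_f$. The tool is Lemma~\ref{l:pul}-type control of $\mu_f$ under pull-backs together with Lemma~\ref{impkoebe}: along a pull-back of a fixed-size interval $V$ around $c$ with criticality $N$, the comparison of $|\widehat{J}_0|/|J_0|$ with $|f^m(\widehat{J}_0)|/|f^m(J_0)|$ degrades by an exponent $\ell_{\max}^{-N}$. Taking $N \to \infty$ this exponent tends to $0$, so one can arrange a nested pair of intervals $J_0 \subset \widehat{J}_0$ at a small scale whose image pair has a bounded, nondegenerate ratio, while $|J_0|/|\widehat{J}_0|$ is forced to be as small as we like. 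Since $\mu_f$ has constant Jacobian $\exp(h_\tp(f))$ on pieces of injectivity (Lemma~\ref{l:maxmeasure}), the $\mu_f$-mass of these intervals is comparable, up to the fixed factor $\exp(-mh_\tp(f))$ and a bounded criticality-independent constant, to the mass of their images; hence $\mu_f(J_0)/\mu_f(\widehat{J}_0)$ is also forced to be arbitrarily small while the two intervals sit at comparable scales and positions. Here I would take $J_0$ and $\widehat{J}_0$ so that after an extra rescaling they become two adjacent intervals $(x-\eps,x)$ and $(x,x+\eps)$ (or one contained in a bounded dilate of the other), so that Lemma~\ref{measureadj} applies.

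Finally, I would derive the contradiction: the previous step yields, for every $C>1$, a point $x$ and an $\eps>0$ with $x\pm\eps\in I$ and $\mu_f((x,x+\eps)) > C\,\mu_f((x-\eps,x))$ (or the reversed inequality), which is incompatible with the characterization of doubling in Lemma~\ref{measureadj}. Therefore no recurrent critical point can exist.

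\textbf{Main obstacle.} The delicate point is the second step: one must be sure that the only place distortion is lost is the explicit $\ell_{\max}^{-N}$ exponent coming from the finitely many critical passages, i.e.\ that the Koebe-type constants $C'$, $\tau'$, $C_1$ can be taken uniformly and that the ``extra'' rescaling needed to land on genuinely adjacent intervals $(x-\eps,x)$, $(x,x+\eps)$ (so as to invoke Lemma~\ref{measureadj}) does not itself require doubling-type control — this has to be done at a scale where semi-hyperbolic-style Koebe estimates around $c$ are available, using the fixed neighborhood $V$ of $c$ and the non-flatness bound~(\ref{nonflat}). Making the geometry line up — choosing the pull-back intervals so that the comparison lemma is applicable with a fixed $\tau$ while the criticality grows, and simultaneously so that the resulting small-scale intervals are comparable in position and size — is the real work; once that geometry is set, the contradiction with Lemma~\ref{measureadj} is immediate.
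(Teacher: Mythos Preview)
Your plan has a genuine gap, and it is precisely the one you flag as the ``main obstacle'' but then assume away. You propose to let the criticality $N$ go to infinity and use the exponent $\ell_{\max}^{-N}\to 0$ in Lemma~\ref{impkoebe} to force $|J_0|/|\widehat J_0|$ to be arbitrarily small while the image ratio stays bounded. But the constant $C_1$ in Lemma~\ref{impkoebe} is \emph{not} uniform in $N$: from its proof, $C_1 = C'^{\,1+N} M^{2N/\ell_{\min}}$, which blows up exponentially in $N$. Likewise, when you transfer to $\mu_f$-mass via Lemma~\ref{l:pul}, the comparison constant is $2^D$ with $D$ the criticality, so it is certainly not ``criticality-independent'' as you claim. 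With both constants depending on $N$, neither the length argument nor the measure argument survives the limit $N\to\infty$, and the contradiction with Lemma~\ref{measureadj} does not follow.

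The paper's proof avoids this by a structurally different route. The doubling hypothesis is used \emph{twice}, not just at the end: first, via Lemma~\ref{l:doubleboundedfrombelow} and Lemma~\ref{p:maximal2tce}, doubling forces the TCE condition, which (through Lemma~\ref{l:a priori bounds}) yields nice couples of arbitrarily large modulus. This is what makes Lemma~\ref{arblarge} applicable and gives a pull-back of $B_I(c,r)$ with criticality equal to a \emph{fixed} integer $N$ (chosen in advance from the doubling constant), on whose annular pieces $f^m$ is a diffeomorphism. Second, doubling is used through Lemma~\ref{lebsmall2doublsmall} (the direction opposite to the one you invoke): a Lebesgue length ratio bounded below forces the $\mu_f$-ratio to be as large as desired. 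The contradiction is then obtained at a single fixed scale: Lemma~\ref{orderlength} bounds the length ratio $|L_N|/|B|$ above by $M_0\,\eps_1^{1/\ell_{\min}^N}\le 2M_0$, hence $\mu_f(L_N)\le M_1\,\mu_f(B)$ by iterating Lemma~\ref{measureadj}, while the diffeomorphic control on the annuli together with~(\ref{biebal1}) and~(\ref{biebal2}) gives $\mu_f(L_N)\ge \tfrac{4}{3}M_1\,\mu_f(B)$. No limit in $N$ is taken; all constants are fixed once $N$ is chosen. Your proposal misses both the TCE/nice-couple step and the need to work at a single criticality level.
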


We start with the following observation, see for example~\cite{Riv10} for a proof.
\begin{lemma}[Lemma~1, \cite{Riv10}]\label{l:doubleboundedfrombelow}
Let $(X, \dist)$ be a compact metric space and let $\mu$ be a doubling measure
on $X.$ Then there are constants $C > 0$ and $\alpha > 0$ such that for each sufficiently
small $r > 0$ and each $x\in X$ we have
$$\mu(B(x, r))\geq C r^\alpha.$$
\end{lemma}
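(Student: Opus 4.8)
The plan is to iterate the doubling inequality from a fixed scale at which $\mu$ is uniformly bounded below down to every smaller scale, losing only a fixed power of the ratio of scales at each step. Normalizing, we may assume $\mu(X)=1$. Let $C_*\geq 1$ and $r_*>0$ be the constants from the definition of doubling (that $C_*\geq 1$ is harmless, since $B(x,r)\subseteq B(x,2r)$), and set $r_0:=r_*/2$. The argument splits into two ingredients: (a) a uniform lower bound $c_0:=\inf_{x\in X}\mu(B(x,r_0))>0$ at the single scale $r_0$; and (b) a routine chaining step that propagates this bound to all scales $r\in(0,r_0)$.

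For ingredient (b): fix $x\in X$ and $r\in(0,r_0)$, and let $k=k(r)\geq 1$ be the integer with $2^{k-1}r<r_0\leq 2^k r$, that is $k=\lceil\log_2(r_0/r)\rceil$. For each $j\in\{0,\dots,k-1\}$ one has $2^j r\leq 2^{k-1}r<r_*$, so the doubling inequality applies at radius $2^j r$; chaining the resulting $k$ inequalities gives $\mu(B(x,2^k r))\leq C_*^{\,k}\mu(B(x,r))$. Since $2^k r\geq r_0$ we have $B(x,r_0)\subseteq B(x,2^k r)$, hence $\mu(B(x,2^k r))\geq c_0$, and therefore $\mu(B(x,r))\geq c_0 C_*^{-k}$. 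Using $k\leq\log_2(r_0/r)+1$ and $C_*\geq 1$ one rewrites $C_*^{-k}$ as a power of $r$, obtaining $\mu(B(x,r))\geq c_0 C_*^{-1} r_0^{-\log_2 C_*}\, r^{\log_2 C_*}$. This is the assertion of the lemma with $\alpha:=\log_2 C_*$ and $C:=c_0 C_*^{-1} r_0^{-\log_2 C_*}>0$, valid for every $x\in X$ and every $r\in(0,r_0)$, hence for all sufficiently small $r>0$.

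The main obstacle is ingredient (a). One first notes that a doubling measure charges every ball: in the situations where this lemma is invoked $\mu$ has full topological support in $X$ (for the maximal entropy measure this is Lemma~\ref{l:maxmeasure}), so $\mu(B(x,\rho))>0$ for all $x\in X$ and $\rho>0$. One then observes that $x\mapsto\mu(B(x,r_0))$ is lower semicontinuous: if $x_n\to x$ and $y\in B(x,r_0)$ then $y\in B(x_n,r_0)$ for all large $n$, so the claim follows by Fatou's lemma. A lower semicontinuous, strictly positive function on a compact space attains a positive minimum, which yields $c_0>0$; this is precisely the abstract analogue of Lemma~\ref{positiveopenset}, proved by the same compactness argument. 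Combining (a) with (b) completes the proof.
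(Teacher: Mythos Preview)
The paper does not supply its own proof of this lemma; it is quoted verbatim from \cite{Riv10} with only a pointer to that reference. Your argument is the standard one and is correct: iterate the doubling inequality $\lceil\log_2(r_0/r)\rceil$ times to reduce the lower bound at scale $r$ to a uniform lower bound $c_0$ at a single fixed scale $r_0$, and obtain $c_0>0$ from compactness together with lower semicontinuity of $x\mapsto\mu(B(x,r_0))$.

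Two small remarks. First, you are right to flag the full-support hypothesis: with the paper's definition of doubling (only the inequality $\mu(B(x,2r))\le C_*\mu(B(x,r))$ for $r<r_*$), a doubling measure on a disconnected compact space can vanish on an open set, and then the conclusion of the lemma is false. Your fix---observing that in the only application the measure is $\mu_f$, which has full support by Lemma~\ref{l:maxmeasure}, so that Lemma~\ref{positiveopenset} (or its abstract analogue) gives $c_0>0$---is exactly what is needed. Second, your formula yields $\alpha=\log_2 C_*$, which is $0$ when $C_*=1$; since the statement asks for $\alpha>0$, simply replace $C_*$ by $\max\{C_*,2\}$ at the outset, which is harmless.
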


Throughout the rest of this section, fix a multimodal interval map $f: I\to I$ in~$\sA$ that is topologically exact, and let $\mu_f$ be its maximal entropy measure given by Lemma~\ref{l:maxmeasure}. In particular, $\mu_f$ is non-atomic and its support is equal to $I$.

\subsection{preliminaries}
\begin{lemma}\label{lebsmall2doublsmall}
If $\mu_f$ is doubling, then for each $M'>1$ there is $\eps>0$ such that the following holds. For each pair of adjacent subintervals $L$ and $R$ of $I$ with $|L|\geq \eps |R|$, we have $\mu_f(L)\geq M'\mu_f(R).$
\end{lemma}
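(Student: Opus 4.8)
To prove Lemma~\ref{lebsmall2doublsmall} I would extract from the doubling hypothesis the two-sided comparison of \emph{equal-length adjacent} intervals furnished by Lemma~\ref{measureadj}, and then iterate it. Concretely: since $\mu_f$ is non-atomic, of full support and (by hypothesis) doubling, Lemma~\ref{measureadj} yields a constant $C>1$ with $C^{-1}\mu_f((x,x+\eta))\le\mu_f((x-\eta,x))\le C\mu_f((x,x+\eta))$ for every $x\in I$ and $\eta>0$ with $x\pm\eta\in I$. Put $\theta:=C/(C+1)\in(0,1)$. The point of the argument is that adjacency forces $\mu_f$ to obey a Hölder-type upper bound at every point and scale: a definite proportion of $\mu_f((b,b+r))$ must sit in the right half $(b+r/2,b+r)$.

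The key step is the \emph{halving estimate}: whenever $b\in I$, $r>0$ and $(b,b+r)\subset I$, one has $\mu_f((b,b+r/2))\le\theta\,\mu_f((b,b+r))$. This follows in one line from Lemma~\ref{measureadj} applied with $x=b+r/2$, $\eta=r/2$: writing $A=\mu_f((b,b+r/2))$ and $B=\mu_f((b+r/2,b+r))$ one gets $A\le CB$, hence $\mu_f((b,b+r))=A+B\ge(1+C^{-1})A$. Iterating the halving estimate $k$ times (all intermediate intervals lie inside $(b,b+r)\subset I$, so every application of Lemma~\ref{measureadj} is legitimate) and then enlarging gives, for any $0<\rho\le r$,
\[\mu_f((b,b+\rho))\ \le\ \theta^{\lfloor\log_2(r/\rho)\rfloor}\,\mu_f((b,b+r)).\]
There is of course the mirror-image statement for intervals lying to the left of $b$; the two orientations are handled identically (equivalently, one may replace $I$ by its mirror image, which preserves all the hypotheses).

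Finally, given $M'>1$, I would pick $N\ge1$ with $C\theta^{N}\le (M')^{-1}$ (e.g.\ $N=\lceil\log(CM')/\log(1/\theta)\rceil$) and set $\eps:=2^{N}$. Let $L$ and $R$ be adjacent subintervals of $I$ with $|L|\ge\eps|R|$; we may assume both are nondegenerate, the remaining cases being trivial. Let $b$ be the common endpoint, put $t:=|R|$ and $\ell:=|L|\ge\eps t\ge t$, and (possibly after reflecting $I$) assume $R=(b-t,b)$ and $L=(b,b+\ell)$. Apply the halving estimate with this $b$, $r=\ell$ and $\rho=t$ to the subinterval $R':=(b,b+t)\subset L$: since $\lfloor\log_2(\ell/t)\rfloor\ge N$ and $0<\theta<1$, this gives $\mu_f(R')\le\theta^{N}\mu_f(L)$. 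Next, Lemma~\ref{measureadj} with $x=b$, $\eta=t$ (admissible since $b-t$ is the left endpoint of $R\subset I$ and $b+t\le b+\ell\le1$) gives $\mu_f(R)\le C\mu_f(R')$. Combining, $\mu_f(R)\le C\theta^{N}\mu_f(L)\le (M')^{-1}\mu_f(L)$, i.e.\ $\mu_f(L)\ge M'\mu_f(R)$, as required.

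The argument is soft once Lemma~\ref{measureadj} is available; there is no serious obstacle, and the only points demanding attention are the orientation bookkeeping around the common endpoint $b$ and checking that each invocation of Lemma~\ref{measureadj} respects its hypothesis $x\pm\eta\in I$. This last point is precisely why I reduce to comparing $R$ with the same-length subinterval $R'\subset L$ (one controlled application of Lemma~\ref{measureadj} plus one application of the halving estimate) rather than trying to relate $\mu_f(R)$ and $\mu_f(L)$ directly.
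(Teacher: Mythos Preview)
Your proof is correct and is essentially the same as the paper's: both iterate Lemma~\ref{measureadj} along a dyadic chain of intervals at the common endpoint, with your ``halving estimate'' $\mu_f((b,b+r/2))\le\theta\,\mu_f((b,b+r))$ (where $\theta=C/(C+1)$) being exactly the paper's growth step $\mu_f(L_{i+1})\ge(1+C^{-1})\mu_f(L_i)$ read in the opposite direction. Your choice of $N$ with $C\theta^N\le(M')^{-1}$ coincides with the paper's choice of the minimal $m$ with $C^{-1}(1+C^{-1})^m\ge M'$, and the resulting $\eps=2^N$ matches the paper's $\eps=2^m$.
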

\begin{proof}
Let  $C>1$  be the constants given by Lemma~\ref{measureadj}. For any $M'>1$, let $m$ be the minimal integer such that $C^{-1}(1+C^{-1})^m\geq M',$ and put $\eps:=2^m.$ For each pair of adjacent subintervals $L$ and $R$ of $I$ with $|L|\geq \eps |R|$, denote by~$a$ the common endpoint of~$L$ and~$R$, and for every $i\in \{0, 1, \cdots, m\}$ put $L_i:=L\cap B(a, 2^i|R|).$  By Lemma~\ref{measureadj} we know $\mu_f(L_0)\geq C^{-1}\mu_f(R)$, and for each every $i\in \{0, 1, \cdots, m-1\}$ we have $$\mu_f(L_{i+1})\geq (1+C^{-1})\mu_f(L_i).$$ It follows inductively that $$\mu_f(L)\geq \mu_f(L_m)\geq (1+C^{-1})^m\mu_f(L_0)\geq C^{-1}(1+C^{-1})^m \mu_f(R)\geq M'\mu_f(R).$$ This completes the proof.
\end{proof}

\begin{lemma}\label{l:pul}
There exist $\tau>0$ such that the following holds. For each open subinterval $V$ of $I$ with $|V|< \tau$, every integer $m\geq 1$ and each pull-back $W$ of $V$ by $f^m$, if we denote by $D$ the the number of those $j\in \{0, \cdots, m -1\}$ such that the connected component of $f^{-(m-j)}(V)$ containing $f^j(W)$ intersects $\Crit(f),$ then \begin{multline*}
\exp(-m h_\tp(f))\mu_f(f^m(W)) \leq \mu_f(W)\leq 2^D \exp(-m h_\tp(f)) \mu_f(f^m(W))\\\leq 2^D \exp(-m h_\tp(f)) \mu_f(V).
\end{multline*}
\end{lemma}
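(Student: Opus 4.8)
The plan is to exploit the two properties of $\mu_f$ recorded in Lemma~\ref{l:maxmeasure}: it is non-atomic, and its Jacobian is constant equal to $s:=\exp(h_\tp(f))$, which means $\mu_f(f(A))=s\,\mu_f(A)$ whenever $f$ is injective on a Borel set $A$. First I would use Lemma~\ref{l:pullstable} to fix $\tau>0$ so small that every pull-back, by any iterate, of any interval of length less than $\tau$ has length smaller than $\kappa:=\inf\{|c_1-c_2|:c_1,c_2\in\Crit(f),\,c_1\neq c_2\}$; in particular every such pull-back contains at most one critical point of $f$.

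Next, given $V$, $m$ and a pull-back $W$ of $V$ by $f^m$ as in the statement, set $W_j':=f^j(W)$ for $j=0,\dots,m$, so that $W_0'=W$, $W_m'=f^m(W)$ and $f(W_j')=W_{j+1}'$. Each $W_j'$ is contained in the connected component $W_j$ of $f^{-(m-j)}(V)$ that contains $f^j(W)$; as $W_j$ is a pull-back of $V$, it has length less than $\kappa$, hence so does $W_j'$, which therefore meets $\Crit(f)$ in at most one point. I claim that for every $j$ one has $\tfrac{s}{2}\,\mu_f(W_j')\le\mu_f(W_{j+1}')\le s\,\mu_f(W_j')$, with equality on the right whenever $W_j\cap\Crit(f)=\emptyset$. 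In that last case $f$ is locally injective, hence monotone, on $W_j\supseteq W_j'$, so the Jacobian identity gives $\mu_f(W_{j+1}')=\mu_f(f(W_j'))=s\,\mu_f(W_j')$. In general, splitting $W_j'$ at its unique critical point (if present) writes $W_j'=A\cup B$ with $f$ monotone on each of $A$ and $B$ and with $A\cap B$ a single point; since $\mu_f$ is non-atomic, $\mu_f(A)+\mu_f(B)=\mu_f(W_j')$, and from $f(W_j')=f(A)\cup f(B)$ and the Jacobian identity one gets both $\mu_f(W_{j+1}')\le s\,\mu_f(A)+s\,\mu_f(B)=s\,\mu_f(W_j')$ and $\mu_f(W_{j+1}')\ge s\max\{\mu_f(A),\mu_f(B)\}\ge\tfrac{s}{2}\,\mu_f(W_j')$.

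Then I would telescope: all the $W_j'$ are non-degenerate intervals, so $\mu_f(W_j')>0$, and $\mu_f(f^m(W))/\mu_f(W)=\prod_{j=0}^{m-1}\mu_f(W_{j+1}')/\mu_f(W_j')$ is a product of $m$ factors each lying in $[s/2,s]$, all of which equal $s$ except for the (at most $D$) indices $j$ with $W_j\cap\Crit(f)\neq\emptyset$. Hence $s^m2^{-D}\mu_f(W)\le\mu_f(f^m(W))\le s^m\mu_f(W)$, which rearranges to the first two displayed inequalities of the lemma; the last inequality is immediate from $f^m(W)\subseteq V$.

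The one point that needs care — and the reason I work with the images $W_j'=f^j(W)$ rather than with the pull-backs $W_j$ themselves — is that interval maps need not be open: in general $f^j(W)$ is a proper subinterval of $W_j$ and $f^m(W)$ a proper subinterval of $V$, so one cannot simply iterate the relation $\mu_f(f(W_j))=s\,\mu_f(W_j)$ down the chain of pull-backs. All that is actually needed is the containment $W_j'\subseteq W_j$, which controls how many critical points are met, and the factor $2$ at each critical level is exactly the price of a turning point possibly cutting some $W_j'$ into two pieces of comparable $\mu_f$-measure. I expect this bookkeeping to be the only real obstacle; everything else is a direct consequence of the constancy of the Jacobian.
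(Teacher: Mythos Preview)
Your proof is correct and follows essentially the same route as the paper: choose $\tau$ via Lemma~\ref{l:pullstable} so that every intermediate pull-back contains at most one critical point, then track the $\mu_f$-measure of the images $f^j(W)$ step by step, picking up exactly a factor $s=\exp(h_{\tp}(f))$ on injective steps and a factor between $s/2$ and $s$ at the (at most $D$) critical steps. The paper organizes the same computation by listing the critical times $n_1<\cdots<n_D$ explicitly and writing the diffeomorphic stretches and critical steps separately, but the substance is identical.
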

\begin{proof}
Let $\kappa'>0$ be such that for each pair of critical points $c_1\neq c_2$ in $\Crit(f)$ we have $|c_1-c_2|>\kappa',$ and $\tau$ the constant given by Lemma~\ref{l:pullstable} with $\kappa=\kappa'$. We will prove the assertion holds for such $\tau.$ In fact, fix an open subinterval~$V$ of~$I$ with $|V|<\tau,$ an integer $m\geq 1$ and a pull-back $W$ of $V$ by $f^m$. For each $j$ in $\{0, \cdots, m\}$, let $\hW_j$ be the connected component of $f^{-(m-j)}(V)$ containing $f^{j}(W).$ Let $0\leq n_1<n_2< \cdots <n_D\leq m-1$ be the integers of those~$j$ in $\{0, \cdots, m -1\}$ such that $\hW_j$ intersects $\Crit(f)$, and put $n_{D+1}=m.$  By Lemma~\ref{l:maxmeasure},
 for every $i$ in $\{0, 1, \cdots, D\}$  we have
 \begin{equation}\label{e:w2}
 \mu_f(f^{n_{i+1}}(W))=\exp((n_{i+1}-n_{i}-1) h_\tp(f))\mu_f(f^{n_i+1}(W)).
  \end{equation}
 On the other hand, by  our choice of $\tau$ we have that each of $W$ and $f^{n_i}(W)$, $i\in \{1, \cdots, D\}$, contains at most one critical point, so \begin{equation}\label{e:w1}
2^{-1}\exp(n_1 h_\tp(f))\mu_f(W) \leq \mu_f(f^{n_1}(W))\leq \exp(n_1 h_\tp(f))\mu_f(W),
 \end{equation}
 and
 \begin{equation}\label{e:w3} 2^{-1} \exp( h_\tp(f)) \ \mu_f(f^{n_i}(W))\leq \mu_f (f^{n_i+1}(W))\leq \exp( h_\tp(f)) \ \mu_f(f^{n_i}(W)).
 \end{equation}
Combing~(\ref{e:w2}), ~(\ref{e:w1}) and~(\ref{e:w3}), we obtain $$\exp(-m h_\tp(f))\mu_f(f^m(W)) \leq \mu_f(W)\leq 2^D \exp(-m h_\tp(f))\mu_f(f^m(W)),$$ and complete the proof.
\end{proof}

\begin{lemma}\label{orderlength}
There is $\delta_*>0$, $M>1$ and $\tau_*>0$ such that for each $\tau > \tau_*$ the following holds. Let $T$ be a subinterval of $I$ of the length at most $\delta_*$, and let $K\subset J$ be two subintervals of $T$ such that both of connected components of $J\setminus K$ have the length at least $|K|,$ and such that $J$ is $\tau$-well inside in $T$. For each $n\geq 1$ and every pull-back $K_n$ of $K$ by $f^n$ containing a critical point $c$, let~$J_n$ be the pull-back of $J$ by $f^n$ containing~$K_n,$ and let $J_{n-1}$ and $K_{n-1}$ be the pull-back of $J$ and $K$ by $f^{n-1}$ containing $f(K_n)$, respectively. If $f^{n-1}$ maps diffeomorphically a neighborhood of $f(J_n)$ onto $T$ and $$\frac{\max\{|f^n(c)-x|: x\in \partial K\}}{\min\{|f^n(c)-x|: x\in \partial K\}}\leq 2,$$ then for each connected component $W$ of $J_n\setminus K_n$, we have $f^n(W)$ is one of connected components of $J\setminus K$, and $$\frac{|W|}{|K_n|}\leq M\left(\frac{|f^n(W)|}{|K|}\right)^{1/\ell_{\min}}.$$
\end{lemma}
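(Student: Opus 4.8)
The plan is to factor $f^{n}=f^{n-1}\circ f$ and to arrange that the pull-back $J_{n}$ be so short that it meets only the critical point $c$. Inside $J_{n}$ the map $f$ then acts near $c$ as a fold of exponent $\ell_{c}$ (controlled by non-flatness), while $f^{n-1}$ is a diffeomorphism of bounded distortion on $f(J_{n})$ (controlled by the Koebe principle, Lemma~\ref{koebe}). A one-sided length around $c$ is thereby transformed by an $\ell_{c}$-th power followed by a bounded multiplicative factor, and the asserted inequality will come out by bookkeeping of these two operations. The hypothesis that $f^{n}(c)$ lies roughly in the middle of $K$ is exactly what makes the single fold symmetric enough for the two sides of $c$ to be comparable.

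I would first fix the constants. By non-flatness, choose $M_{0}>1$ and $\rho>0$ with $M_{0}^{-1}|A|^{\ell_{c}}\le|f(A)|\le M_{0}|A|^{\ell_{c}}$ for every $c\in\Crit(f)$ and every interval $A\subset B(c,\rho)$ having $c$ as an endpoint. Let $\kappa'$ be the least distance between distinct critical points, use Lemma~\ref{l:pullstable} with $\kappa=\min\{\rho,\kappa'\}$ to obtain $\delta_{1}>0$ such that every pull-back of an interval of length $<\delta_{1}$ has length $<\min\{\rho,\kappa'\}$, and let $C_{0}>1$, $\xi>0$ be the Koebe constants of Lemma~\ref{koebe} for $\tau=1$. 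Then set $\tau_{*}:=1$, $\delta_{*}:=\min\{\delta_{1},\xi\}$ and $M:=6C_{0}M_{0}^{2}$. Given $\tau>1$ and the data $T,\ K\subset J\subset T,\ n,\ K_{n}\ni c,\ J_{n},\ W$ as in the statement: since $|J|\le|T|\le\delta_{*}$, the pull-back $J_{n}$ has length $<\min\{\rho,\kappa'\}$, so $J_{n}\subset B(c,\rho)$, $c$ is its only critical point, and $f$ is strictly monotone on each of the two half-intervals of $J_{n}$ cut off by $c$. The centering hypothesis forces $f^{n}(c)$ into $\operatorname{int}K$, hence $c$ into $\operatorname{int}K_{n}$, so I may write $K_{n}=(c-a',c+a)$, $J_{n}=(c-b',c+b)$ and $W=(c+a,c+b)$ with $0<a<b$ and $0<a',b'$ (treating the right component of $J_{n}\setminus K_{n}$; the left one is symmetric), the strict inequalities holding because both components of $J\setminus K$ are nonempty and $\partial K$, $\partial J$ are disjoint on each side.

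Two facts are then needed. First (combinatorics): the restriction of $f^{n}$ to $(c,c+b)$ is a homeomorphism onto its image, being $f^{n-1}$ --- injective on the neighbourhood of $f(J_{n})$ where it is a diffeomorphism onto $T$, and in particular on $f((c,c+b))\subset f(J_{n})$ --- composed with the monotone $f|_{(c,c+b)}$. It sends $c\mapsto f^{n}(c)\in\operatorname{int}K$, $c+a\in\partial K_{n}\mapsto\partial K$ and $c+b\in\partial J_{n}\mapsto\partial J$; by monotonicity and $K\subsetneq J$ these images are consecutive, so $f^{n}((c,c+a))$ is the half of $K$ between $f^{n}(c)$ and one endpoint and $f^{n}(W)$ is precisely the adjacent component of $J\setminus K$ --- which is the first assertion of the lemma. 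In particular $|f^{n}(W)|\ge|K|$, and, writing $d:=|f^{n}((c,c+a))|$, the centering hypothesis gives $|K|/3\le d\le|K|$. Second (distortion): applying Lemma~\ref{koebe} to $f^{n-1}$ with that neighbourhood of $f(J_{n})$ playing the role of ``$T$'', the interval $f(J_{n})$ playing the role of ``$J$'' and $s=n-1$ (legitimate since $|T|\le\xi$ and $f^{n-1}(f(J_{n}))=J$ is $1$-well inside $T$), $f^{n-1}$ has distortion $\le C_{0}$ on $f(J_{n})$; setting $\lambda:=|f^{n}((c,c+b))|/|f((c,c+b))|$, one gets $|f^{n}(A)|/|f(A)|\in[C_{0}^{-1}\lambda,C_{0}\lambda]$ for every subinterval $A$ of $(c,c+b)$.

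Combining: from $d\le C_{0}\lambda|f((c,c+a))|\le C_{0}M_{0}\lambda a^{\ell_{c}}$ and $d\ge|K|/3$ one obtains $|K_{n}|\ge a\ge(3C_{0}M_{0})^{-1}(|K|/\lambda)^{1/\ell_{c}}$; from $\lambda|f((c,c+b))|=|f^{n}((c,c+b))|=d+|f^{n}(W)|\le 2|f^{n}(W)|$ (using $d\le|K|\le|f^{n}(W)|$) and $|f((c,c+b))|\ge M_{0}^{-1}b^{\ell_{c}}$ one obtains $|W|\le b\le 2M_{0}(|f^{n}(W)|/\lambda)^{1/\ell_{c}}$; dividing these,
\[
\frac{|W|}{|K_{n}|}\ \le\ 6C_{0}M_{0}^{2}\left(\frac{|f^{n}(W)|}{|K|}\right)^{1/\ell_{c}}\ \le\ 6C_{0}M_{0}^{2}\left(\frac{|f^{n}(W)|}{|K|}\right)^{1/\ell_{\min}},
\]
the last step because $|f^{n}(W)|/|K|\ge 1$ and $1/\ell_{c}\le 1/\ell_{\min}$. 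This is the claimed bound with $M=6C_{0}M_{0}^{2}$. I expect the main obstacle, once the geometric picture is set up, to be purely the bookkeeping: one must keep the \emph{same} reference slope $\lambda$ in both halves of the estimate, and use that any constant raised to a power $1/\ell_{c}\le 1$ is bounded by itself --- this is what makes $M$ uniform, independent of $\tau$ and of the critical point.
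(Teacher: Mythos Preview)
Your proof is correct and follows essentially the same strategy as the paper's: factor $f^n = f^{n-1}\circ f$, arrange via Lemma~\ref{l:pullstable} that $J_n$ meets only the critical point $c$, then control the $f^{n-1}$-step by the Koebe principle (Lemma~\ref{koebe}) and the $f$-step by non-flatness. The only noteworthy differences are bookkeeping: the paper chooses $\tau_*$ large enough that the Koebe constant is at most $3/2$ and compares $|f(W)|/|K_{n-1}|$ directly to $|f^n(W)|/|K|$, whereas you take $\tau_*=1$, work with the one-sided intervals $(c,c+a)\subset(c,c+b)$ and a reference expansion $\lambda$, and invoke the centering hypothesis explicitly to bound $d$ --- this makes the role of that hypothesis more transparent than in the paper's sketchier non-flatness step. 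One small slip: $f^{n-1}(f(J_n))=f^n(J_n)$ is a proper subset of $J$ (since $f$ folds at $c$), not equal to $J$ as you wrote; but since $f^n(J_n)\subset J$ is then a fortiori $1$-well inside $T$, your Koebe application remains valid.
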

\begin{proof} Fix a sufficiently large number $\tau_*>1,$ and let   $C$ and $\delta_*$ be the constants given by Lemma~\ref{koebe} with $\tau=\tau_*$. Enlarging $\tau_*$ if necessary, we assume that $C\leq 3/2$. Reducing $\delta_*$ if necessary, we also assume that each pull-back of each interval of length at most $\delta_*$ contains at most one critical point.  By the hypothesis, each connected component of $J_{n-1}\setminus K_{n-1}$ is mapped diffeomorphically onto one of  connected component of $J\setminus K$. Moreover, by our choice of $\delta_*$ we know that  each connected component $W$ of $J_n\setminus K_n$ is mapped diffeomorphically onto one of  connected component of  $J_{n-1}\setminus K_{n-1}.$ This proves that $f^n(W)$ is one of connected component of $J\setminus K.$

The desired inequality follows immediately from Koebe distortion and the non-flatness of critical points. In fact, by  Lemma~\ref{koebe}  we have $$ \frac{|f(W)|}{|K_{n-1}|} \leq \frac{3}{2}\frac{|f^n(W)|}{|K|}.$$ On the other hand, by the non-flatness of critical points, there is a constant $C$ depending only on $f$ such that $$\frac{|W|}{|K_n|}\leq C\left(\frac{|f(W)}{|K_{n-1}|}\right)^{1/\ell_c}.$$ This proves the desired inequality with $M=(3/2)^{1/\ell_{\min}}C.$ \end{proof}

\subsection{Topologically Collect-Eckmann condition}
 Let $f: I\to I$
be a multimodal  map and fix $r > 0.$ Recall that given
an integer $n \geq 1,$ the criticality of $f^n$ at a point $x$ of $I$ with respect to $r$ is the number of those $j$
in $\{0, \cdots, n-1\}$ such that the connected component of $f^{(n-j)}(B(f^n(x), r))$ containing
$f^j(x)$ contains a critical point of~$f$ in $\Crit'(f).$  We say that  $f$ satisfies the \emph{Topological Collet-
Eckmann (TCE) condition,} if for some choice of $r > 0$ there are constants $D\geq 1$
and~$\theta$ in $(0, 1),$ such that the following property holds: For each point $x$ in $I$ the
set $G_x$ of all those integers $m\geq 1$ for which the criticality of $f^m$ at $x$ is less than
or equal to $D,$ satisfies $$\liminf_{n\to +\infty}\frac{1}{n} \#(G_x \cap \{1, \cdots, n\}) \geq \theta.$$ Clearly, \emph{every semi-hyperbolic interval map satisfies the TCE condition.} The Topological Collet-Eckmann condition was first introduced in~\cite{NowPrz98}. We will use the following fact that let $f: I\to I$ be a multimodal  interval map in $\sA$ that is topologically exact, then the TCE condition is characterized by each of the following conditions, see for example~\cite[Corollary~C]{Riv1204} for a proof,
\begin{enumerate}[1.]
\item {\bf Exponential Shrinking of Components condition (ESC).} There are constants $\delta > 0$
and $\lambda > 1$ such that for every interval $J$ contained in $I$ that satisfies $|J|\leq \delta,$
the following holds: For every integer $n\geq 1$ and every connected component
$W$ of $f^{-n}(J)$ we have $|W| \leq  \lambda^{-n}.$
\item {\bf Uniform hyperbolicity on periodic orbits.} There is $\lambda>1$ such that for each integer $n\geq 1$ and each repelling periodic point $p$ of period $n$ we have $|D f^n(p)|\geq \lambda^n.$
\end{enumerate}

The following proposition gives another characterization of the TCE condition, see for example~\cite{Riv1204} for a proof; see also~\cite[Theorem~B]{Riv10} for a similar result where $f$ is a rational map.
\begin{lemma}[Remark~6.2, \cite{Riv1204}]\label{p:maximal2tce}
Let $f:I \to I$ be a  multimodal  interval map in $\sA$ that is topologically exact, and let $\mu_f$ be the maximal entropy measure of $f.$  Then $f$ satisfies the TCE condition if and only if
there are constants $r_0 > 0, \alpha > 0$ and $C > 0$ such that for all $x$ in $I$ and $r$ in  $(0, r_0)$
we have $$\mu_f (B(x, r)) \geq  C r^\alpha,$$
\end{lemma}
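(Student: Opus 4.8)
This equivalence is proved in \cite{Riv1204}; here is the shape of the argument I would give. I would prove the two implications separately, using the two equivalent forms of the TCE condition recalled above --- exponential shrinking of components (ESC) and uniform hyperbolicity on periodic orbits --- together with Lemma~\ref{l:pullstable}, Lemma~\ref{l:pul}, Lemma~\ref{positiveopenset} and the Koebe-type Lemmas~\ref{koebe}--\ref{impkoebe}. Throughout write $s:=\exp(h_\tp(f))$, which is $>1$ since $f$ is topologically exact and non-injective, and recall that the Jacobian of $\mu_f$ is the constant $s$, so $\mu_f(f^n(A))\le s^n\mu_f(A)$ for every Borel set $A$, with equality when $f^n$ is injective on $A$.

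Suppose first that $\mu_f(B(x,r))\ge Cr^\alpha$ for all $x\in I$ and $r\in(0,r_0)$; I would deduce uniform hyperbolicity on periodic orbits. Let $p$ be periodic of period $n$; since $f\in\sA$ it is hyperbolic repelling, so its orbit avoids $\Crit(f)$. Let $g$ be the branch of $f^{-n}$ fixing $p$. By Lemma~\ref{l:pullstable} there is $r_1>0$ (depending on $p$, but this is harmless, see below) such that every pull-back of a ball of radius $r_1$ has length less than half the distance from the orbit of $p$ to $\Crit(f)$; thus for each $k$ the chain defining $W_k:=g^k(B(p,r_1))$ contains no critical point, so $f^{nk}$ is a diffeomorphism on a neighbourhood of $W_k$ and, by Lemma~\ref{koebe} applied with a fixed Koebe space, has distortion bounded by a constant depending only on $f$. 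Hence $|W_k|\asymp|Df^n(p)|^{-k}$ and $p$ lies uniformly deep inside $W_k$, so $W_k\supseteq B(p,c|W_k|)$ for a uniform $c>0$. Since $f^{nk}$ is injective on $W_k$, $\mu_f(W_k)=s^{-nk}\mu_f(B(p,r_1))$; comparing with the hypothesis applied to $B(p,c|W_k|)$ (legitimate for $k$ large, as $|W_k|\to0$) gives $s^{-nk}\gtrsim|W_k|^{\alpha}\asymp|Df^n(p)|^{-k\alpha}$. Taking $k\to\infty$ makes all $p$-dependent constants disappear and yields $|Df^n(p)|\ge s^{n/\alpha}$, i.e.\ uniform hyperbolicity on periodic orbits with ratio $\lambda:=s^{1/\alpha}>1$; hence $f$ satisfies the TCE condition.

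For the converse I would use ESC. Fix a small $\delta_0>0$ with $M\delta_0\le\delta$, where $M:=\sup_I|Df|$ and $\delta$ is the ESC constant, and by Lemma~\ref{positiveopenset} let $c_0>0$ be a lower bound for $\mu_f(J)$ over all intervals $J\subset I$ of length at least $\delta_0/2$. Given $x\in I$ and a small $r>0$, let $m$ be the least integer with $|f^m(B_I(x,r))|\ge\delta_0$; it exists by topological exactness and $|f^m(B_I(x,r))|\le M\delta_0\le\delta$. Since $B_I(x,r)$ is contained in the pull-back of $f^m(B_I(x,r))$ by $f^m$ through $x$, ESC bounds the length of that pull-back by $\lambda^{-m}$; as $|B_I(x,r)|\ge r$ for $r$ small, this forces $m\le C_0\log(1/r)$ with $C_0=1/\log\lambda$. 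The decisive remaining step is to exhibit, \emph{inside} $B_I(x,r)$, a genuine pull-back $W$ of an interval $V$ of length at least $\delta_0/2$ by $f^m$ with $f^m(W)=V$; granting this, Lemma~\ref{l:pul} gives
\[
\mu_f(B_I(x,r))\ \ge\ \mu_f(W)\ \ge\ \exp(-m\,h_\tp(f))\,\mu_f(V)\ \ge\ c_0\,s^{-m}\ \ge\ c_0\,\exp\!\big(-C_0 h_\tp(f)\log(1/r)\big)\ =\ c_0\,r^{\,C_0 h_\tp(f)},
\]
which is the assertion with $\alpha:=C_0 h_\tp(f)$ and constant $c_0$ (for $r$ bounded away from $0$ it is trivial after shrinking the constant).

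The hard part is exactly this last geometric step, and it is where the difference between interval maps and open maps is felt: $B_I(x,r)$ itself need not be a pull-back of its image, and a component of $f^{-m}$ of a definite-size interval through $x$ may be badly folded, mapping onto only a tiny subinterval and making Lemma~\ref{l:pul} useless. The way around this --- following the construction of nice couples of arbitrarily large modulus for TCE maps in \cite{PrzRiv07}, adapted to the interval as in the reasoning behind Lemmas~\ref{improvedkoebe}--\ref{impkoebe}, and in parallel with \cite[Theorem~A]{Riv10} --- is to take $V$ to be (a suitable sub-interval of) a nice interval near $f^m(x)$, together with a nice couple $(\widehat V,V)$ of large modulus, so that the pull-back $W$ of $V$ by $f^m$ landing in $B_I(x,r)$ satisfies $f^m(W)=V$ and has criticality bounded by a constant depending only on $f$; one may have to replace $m$ by a nearby ``good'' time supplied by the TCE condition, which alters $C_0$ only by a bounded factor. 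I expect establishing this --- the simultaneous control of criticality, of the position of $W$ relative to $B_I(x,r)$, and of the iterate count remaining $O(\log(1/r))$ --- to be the main obstacle; the first implication above is comparatively soft.
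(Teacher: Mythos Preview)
The paper does not give its own proof of this lemma; it simply cites \cite{Riv1204} (and \cite{Riv10} for the rational-map analogue). So there is nothing in the paper to compare your argument against line by line. That said, let me comment on the proposal itself.

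Your first implication (power lower bound $\Rightarrow$ uniform hyperbolicity on periodic orbits $\Rightarrow$ TCE) is correct and is the standard route. The use of bounded distortion to get $|W_k|\asymp|Df^n(p)|^{-k}$, together with $\mu_f(W_k)=s^{-nk}\mu_f(B(p,r_1))$ and the hypothesis, cleanly gives $|Df^n(p)|\ge s^{n/\alpha}$ after letting $k\to\infty$; the $p$-dependent constants do vanish in the limit as you note.

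For the converse you have made life much harder than necessary. You already wrote down the key inequality in your first paragraph: $\mu_f(f^n(A))\le s^n\mu_f(A)$ for \emph{every} Borel set $A$ (partition $A$ into pieces on which $f^n$ is injective and use that the Jacobian is the constant $s$). Apply this directly with $A=B_I(x,r)$ and $n=m$: since $|f^m(B_I(x,r))|\ge\delta_0$, Lemma~\ref{positiveopenset} gives $\mu_f(f^m(B_I(x,r)))\ge c_0$, and hence
\[
\mu_f(B_I(x,r))\ \ge\ s^{-m}\,\mu_f\bigl(f^m(B_I(x,r))\bigr)\ \ge\ c_0\,s^{-m}\ \ge\ c_0\,r^{\,h_\tp(f)/\log\lambda}.
\]
No auxiliary pull-back $W\subset B_I(x,r)$ with $f^m(W)=V$ is needed, and in particular the apparatus of nice couples, bounded criticality, and ``good'' TCE times that you invoke in the last paragraph is superfluous here. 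The reason Lemma~\ref{l:pul} seemed to force you into that detour is that it is stated only for pull-backs; but the one-sided Jacobian inequality you need holds for arbitrary sets, and you already had it. So the ``main obstacle'' you flag is not an obstacle at all.
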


\subsection{Nice sets and nice couples}
Recall that an open subset~$V$ of~$I$ is \emph{a nice set for~$f$}, if for every integer $n\ge 0$ we have $f^n(\partial V)\cap V=\emptyset$, and
each connected component of~$V$ contains exactly one critical point of~$f$.
In this case, for each~$c$ in~$\Crit(f)$ we denote by~$V^c$ the connected component of~$V$ containing~$c$. A \emph{nice couple for $f$} is a pair of nice sets $(\hV , V )$ such that $\overline{V} \subset \hV$, and such that for every integer $n \ge 1$ the set~$f^n(\partial V)$ is disjoint from~$\hV$.
Moreover, for a  nice nice couple~$(\hV, V)$, we define the \emph{modulus of $(\hV, V)$} as
 $$\md (\hV, V):= \min\{\md (\hV^c, V^c): c\in \Crit(f)\}$$ where
$$\md (\hV^c, V^c):=\sup\{\tau>0: V^c \text{ is $\tau$-well inside } \hV^c\}.$$

\begin{lemma}\label{arblarge}
Assume that $f$ has arbitrarily small
nice couples of arbitrarily large modulus. Then for each recurrent critical point $c_0$
in $\Crit(f)$, $\kappa\in (0, 1)$, $N\geq 2$ and each $r_* > 0$ there is $c\in  \Crit(f)$, $r\in (0, r_*)$ and an
integer $m\geq 1$, such that $f^m(c_0) \in B_I(c, r)$ and such that the pull-back $\hU$ (resp. $U$)
of $B_I(c, r)$ (resp. $B_I(c, \kappa r)$) containing $c_0$ satisfies the following properties.
\begin{enumerate}[1.]
\item $|\hU|<r_*$;
\item The criticality of $f^m$ at $x$ with respect to $\kappa r$ is equal to $N$;
\item $f^m$ maps diffeomorphically each connected component of $\hU\setminus U$  onto one of the connected component of $B_I(c,r)\setminus B_I(c, \kappa r)$.
\end{enumerate}
\end{lemma}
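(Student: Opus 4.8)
The plan is to fix a recurrent critical point $c_0$, a constant $\kappa\in(0,1)$, an integer $N\ge 2$ and $r_*>0$, and to construct the data $(c,r,m)$ by following the forward orbit of $c_0$ through a small nice couple and stopping the first time we have accumulated exactly $N$ critical visits. First I would invoke the hypothesis to choose a nice couple $(\hV,V)$ for $f$ with $\md(\hV,V)$ very large (large enough that the Koebe-type control from Lemma~\ref{koebe} and Lemma~\ref{improvedkoebe} makes all distortions as close to $1$ as needed, in particular forces the ratio bound in item (3) of Lemma~\ref{orderlength}), and with all components of $\hV$ of diameter $<r_*$ and small enough that Lemma~\ref{l:pullstable} applies with the relevant $\kappa$. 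Since $c_0$ is recurrent, $c_0\in\omega(c_0)$, so the forward orbit of $c_0$ returns to $V^{c_0}$ infinitely often; choose a large return time so that the orbit visits $V$ at least $N$ times before returning.

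The main construction is as follows. Let $0<m_1<m_2<\cdots$ be the successive times $j\ge 1$ at which $f^j(c_0)$ lies in $V$, and let $c^{(i)}$ be the critical point with $f^{m_i}(c_0)\in V^{c^{(i)}}$. Set $m:=m_N$ and $c:=c^{(N)}$, and take $r>0$ so that $B_I(c,r)\subset \hV^{c}$ and $B_I(c,\kappa r)\subset V^{c}$; concretely one can take $r$ comparable to the distance from $f^{m}(c_0)$ to $c$, chosen in $(0,r_*)$, and shrink $\kappa r$ accordingly — here the freedom to take $\md(\hV,V)$ large is what guarantees both balls fit inside the respective components and that $B_I(c,\kappa r)$ is well inside $B_I(c,r)$. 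Let $\hU$ (resp.\ $U$) be the pull-back of $B_I(c,r)$ (resp.\ $B_I(c,\kappa r)$) by $f^{m}$ containing $c_0$. Property (1), $|\hU|<r_*$, follows from Lemma~\ref{l:pullstable} by taking the nice couple small enough at the outset. For property (2) I would argue that, because $(\hV,V)$ is a nice couple, the pull-backs of $B_I(c,r)\subset\hV$ along the orbit segment $f^0(c_0),\dots,f^{m}(c_0)$ meet a critical point of $f$ in $\Crit'(f)$ exactly at the times $j$ with $f^j(c_0)\in V$ among $m_1,\dots,m_N$ — the niceness of $V$ and the fact that $f^n(\partial V)$ avoids $\hV$ prevent any ``spurious'' critical intersection outside the return times — so the criticality of $f^{m}$ at the relevant base point with respect to $\kappa r$ is exactly $N$. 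Property (3), that $f^{m}$ maps each component of $\hU\setminus U$ diffeomorphically onto a component of $B_I(c,r)\setminus B_I(c,\kappa r)$, is the standard consequence of the nice-couple structure: between consecutive critical returns the map is a diffeomorphism onto $\hV^{c^{(i)}}$, and the pull-back of the annular region $B_I(c,r)\setminus B_I(c,\kappa r)$ stays in the ``collar'' $\hV\setminus V$, which contains no critical point, so no folding occurs.

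The hard part, and the step I would spend the most care on, is the bookkeeping in property (2): making precise that the number of critical intersections of the pull-back chain is \emph{exactly} $N$ and not merely at most $N$. This requires simultaneously using the niceness of $V$ (to get a lower bound — each genuine return contributes a critical intersection) and a separation/shrinking argument (to get the upper bound — no extra intersections creep in), and one must be slightly careful about the distinction between $\Crit(f)$ and $\Crit'(f)=\Crit(f)$ in this multimodal $C^3$ setting and about the precise base point at which criticality is measured. A secondary technical point is calibrating $r$ and $\kappa r$ so that the ratio hypothesis $\max\{|f^{m}(c)-x|:x\in\partial(\cdot)\}/\min\{\cdots\}\le 2$ of Lemma~\ref{orderlength} can be arranged — this is why the statement allows $c$ to differ from $c_0$ and $r$ to be chosen freely in $(0,r_*)$, giving exactly the slack needed. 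Once these points are settled, the three asserted properties drop out, and the existence of arbitrarily small nice couples of arbitrarily large modulus — which for TCE (hence semi-hyperbolic) maps is supplied by the results cited in the introduction, cf.\ \cite{PrzRiv07} — closes the argument.
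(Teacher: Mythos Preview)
The paper does not give a self-contained proof of this lemma: its entire proof reads ``See~\cite[Lemma~5]{Riv10} for a proof. There it is proved for rational maps, but the proof can be adapted to yield the lemma.'' Your plan is precisely the adaptation the paper has in mind --- take a small nice couple $(\hV,V)$ of large modulus, list the successive entrance times $m_1<m_2<\cdots$ of the forward orbit of the recurrent critical point $c_0$ into $V$, stop at $m=m_N$, and read off the three properties from the nice-couple combinatorics --- so at the level of strategy you are exactly in line with the cited source.

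One point where your write-up is looser than the argument in \cite{Riv10}: you propose to choose $r$ so that $B_I(c,r)\subset\hV^c$ and $B_I(c,\kappa r)\subset V^c$ and then run the criticality count for pull-backs of these \emph{balls}. The clean version of the argument works instead with the nice sets themselves: one pulls back $V^{c}$ and $\hV^{c}$ along the orbit segment, and niceness of the couple (the condition $f^n(\partial V)\cap\hV=\emptyset$) guarantees that the intermediate pull-backs intersect $\Crit'(f)$ \emph{exactly} at the entrance times $0=m_0,m_1,\dots,m_{N-1}$ and at no other time --- this is the bookkeeping you correctly flag as the delicate step. Only after this is established does one replace $V^c$, $\hV^c$ by concentric balls $B_I(c,\kappa r)\subset B_I(c,r)$, using the large modulus to fit them inside the respective components; since the ball pull-backs are contained in the nice-set pull-backs, the ``at most $N$'' part of the count transfers immediately, while the ``at least $N$'' part uses that each $f^{m_i}(c_0)$ already lies in the smaller set. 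Handling the passage nice-sets $\to$ balls in this order, rather than working with balls from the start, is what makes the exact count go through without the ambiguity about base points that you mention.
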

\begin{proof}
See~\cite[Lemma~5]{Riv10} for a proof. There it is proved for rational maps,  but the proof can be adapted to yield the lemma.
\end{proof}

We also use the following lemma. In the case $f$ is a complex rational map, it is~\cite[Proposition~4.2]{PrzRiv07}. The proof applies without changes to the case where~$f$ is an interval map.
\begin{lemma}
\label{l:a priori bounds}
Assume that the map~$f$ satisfies  the TCE condition. Then for every $\delta>0$ and $\tau>0$ there is a nice couple $(\hV, V)$
 of modulus at leat $\tau$
 satisfying $\hV \subset B(\Crit(f), \delta)$.
\end{lemma}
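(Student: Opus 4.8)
The plan is to adapt the proof of~\cite[Proposition~4.2]{PrzRiv07} to the interval setting. The first step is to replace the TCE hypothesis by the equivalent Exponential Shrinking of Components condition (see the discussion preceding Lemma~\ref{p:maximal2tce}): there are $\delta_0>0$ and $\lambda>1$ such that every connected component~$W$ of $f^{-n}(J)$ with $|J|\le\delta_0$ satisfies $|W|\le\lambda^{-n}$. This exponential shrinking, together with the Koebe distortion estimates of~\S\ref{123} (Lemma~\ref{koebe} and Lemma~\ref{improvedkoebe}), is the only dynamical input the argument uses; none of these tools requires~$f$ to be open, which is why the complex proof transfers essentially verbatim.

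I would first fix a small nice set~$\hV\subset B(\Crit(f),\delta)$; such a set exists by the standard construction of nice sets for interval maps, a Baire/measure argument in which the short returns of the boundary are removed by genericity and the long ones by the Exponential Shrinking of Components. The structural fact I would lean on is that for a nice set~$\hV$, every first-return domain~$D$ to~$\hV$ satisfies $\overline{D}\subset\hV$ and has the property that the forward orbit of~$\partial D$ is disjoint from~$\hV$ (indeed $f^j(\partial D)$ lies in $\overline{I\setminus\hV}$ at times~$j$ before the return, on $\partial\hV$ at the return, and on the forward orbit of $\partial\hV$ afterwards). Hence, letting $V^c$ be, for each $c\in\Crit(f)$, the first-return domain to~$\hV$ containing~$c$ — nonempty by topological exactness, since the orbit of~$c$ must re-enter~$\hV$ — the pair $(\hV,V)$ with $V:=\bigcup_{c}V^c$ is automatically a nice couple contained in $B(\Crit(f),\delta)$, and it only remains to make its modulus at least~$\tau$, that is, to make each $V^c$ $\tau$-well inside $\hV^c$.

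To control the modulus I would exploit that $V^c$ is a connected component of $f^{-k}(\hV^{c'})$ with~$k$ the first return time of~$c$ to~$\hV$: by the Exponential Shrinking of Components $|V^c|\le\lambda^{-k}$, while $\dist(c,\partial\hV^c)$ is a fixed positive number, so $\md(\hV^c,V^c)\ge\tfrac{1}{2}\dist(c,\partial\hV^c)\lambda^{k}$, which is large as soon as~$k$ is large. When the $\omega$-limit set of~$c$ is disjoint from $\Crit(f)$, the forward orbit of~$c$ stays at a definite distance from $\Crit(f)$, so~$k$ grows merely by shrinking~$\hV$; when $\omega(c)$ meets $\Crit(f)$, the first return time may stay bounded as~$\hV$ shrinks, and then I would instead pass to a deeper return domain, shrinking~$\hV$ in a controlled way so that the intermediate returns do not meet it, again using the exponential shrinking to bound the sizes of all the pieces and Lemma~\ref{improvedkoebe} to control distortion at the finitely many critical orbit relations $f^j(c)=c'$.

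The main obstacle is exactly this last case. Under the TCE condition the forward orbit of a critical point may accumulate on $\Crit(f)$, so that the first-return domain yields only bounded modulus; then the return depth and the size of~$\hV$ must be chosen simultaneously, and that choice must be organised around the finitely many critical orbit relations, which force the components $\hV^{c}$ to sit at very different scales. The Exponential Shrinking of Components and the Koebe principles of~\S\ref{123} are precisely what make this bookkeeping go through, after which the argument of~\cite[Proposition~4.2]{PrzRiv07} transfers.
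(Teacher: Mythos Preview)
Your proposal takes essentially the same approach as the paper: both defer to \cite[Proposition~4.2]{PrzRiv07} and assert that its proof carries over to the interval setting. In fact the paper supplies no argument beyond the single sentence preceding the lemma (``The proof applies without changes to the case where~$f$ is an interval map''), so your sketch is considerably more detailed than what the paper itself provides; your outline correctly identifies the ESC reformulation, the first-return construction of a nice couple, and the genuine obstacle---critical orbit relations $f^j(c)=c'$ forcing the components~$\hV^c$ to live at mutually incommensurable scales---that the cited reference resolves.
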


\begin{proof}[Proof of Proposition~\ref{semidoubl}]
Arguing by contradiction, we assume that $f$ has a recurrent critical point~$c_0$. Let $\delta_*>0$, $M>1$ and $\tau_*>0$ be the constants given by Lemma~\ref{orderlength}, and let $\delta_0$ be the constant given by Lemma~\ref{l:pullstable} with $\kappa=\delta_*.$ Reducing $\delta_0$ if necessary we assume $\delta_0<\delta_*.$  Moveover, in view of Lemma~\ref{l:doubleboundedfrombelow} and Lemma~\ref{p:maximal2tce}, we know that~$f$ satisfies the TCE condition. Therefore, by Lemma~\ref{l:a priori bounds} we know that the map~$f$ satisfies the hypothesis of Lemma~\ref{arblarge}.
Let $C$ and $r_0$ be the constants given by Lemma~\ref{measureadj}. Put $$\ell_0:=\sum_{i=0}^{+\infty}\frac{1}{\ell_{\min}^i}, \,\,\, M_0:=M^{\ell_0}, \,\,\, M_1:=\sum_{i=0}^{2M_0} C^i$$ and let $\eps_1>0$ be the constants given by Lemma~\ref{lebsmall2doublsmall} with $M'=4M_1$, and let $N\geq 2$ be the integer such that $\eps_1^{1/\ell_{\min}^N}\leq 2.$ Moreover, let $\eps_2>0$ be the constants given by Lemma~\ref{lebsmall2doublsmall} with $M'=2^{N+1}$. Choose a sufficiently large $\eps_3>0$ so that the constant~$\tau'$ given by Lemma~\ref{improvedkoebe} for~$\tau=\eps_3$ and $N$ is at least~$\tau_*.$

Now let $\hU,U, m, r, c$ be given by Lemma~\ref{arblarge} for $N$ and $c_0$ as above and with
$r_*=\min\{r_0, \delta_0\}$ and $$\kappa=\frac{1}{(1+2\eps_3)(1+2\eps_2+4\eps_2\eps_1)}.$$ By the definition, we have that $\hU$ and $U$ are the connected components
of $f^{-m}(B_I(c, r))$ and $f^{-m}(B_I(c, \kappa r))$ containing $c_0$, respectively. Put $$r_1:=(1+ 2\eps_2) \kappa r \text{ and } r_2:= (1+2\eps_2+2\eps_1\eps_2)\kappa r.$$ Let $\hL_1$ and $\hR_1$ be the left-hand and right-hand connected components of $B_I(c, r_1)\setminus B_I(c, \kappa r)$, respectively. Let $\hL_2$ and  $\hR_2$ be the left-hand and right-hand connected components of $B_I(c, r_2)\setminus B_I(c, r_1)$, respectively. The by the definitions of $r_1$ and $r_2$, and Lemma~\ref{lebsmall2doublsmall}, we have
\begin{equation}\label{biebal1}
\frac{\mu_f(\hL_2)}{\mu_f(\hL_1)}\geq 4M_1\,\,\, \text{ and }\,\,\, \frac{\mu_f(\hR_2)}{\mu_f(\hR_1)}\geq 4M_1,
\end{equation}
and
\begin{equation}\label{biebal2}
 \frac{\min\{\mu_f(\hL_1), \mu_f(\hR_1)\}}{\mu_f(B_I(c, \kappa r))}\geq 2^{N+1}.
 \end{equation}

Let $B$ and $\hB$ be the connected component of $B_I(c, r_1)$ and $B_I(c, r_2)$ by $f^m$ containing~$c_0$, respectively. It follows that $f^m$ maps diffeomorphically each connected component of $\hB\setminus U$ onto one of the connected components of $B_I(c, r_2)\setminus B_I(c, \kappa r).$ In particular, if letting $L_{N}$ be left-hand connected component of $\hB \setminus B$, then by Lemma~\ref{l:maxmeasure} we have
$\mu_f(L_N)=\exp(-m h_\tp(f))\mu_f(f^m(L_N))$, and by Lemmas~\ref{l:maxmeasure} and~\ref{l:pul}
\begin{multline}\label{imagmeas}
\mu_f(B)=\mu_f(B\setminus U)+\mu_f(U)\\\leq 2\exp(-m h_\tp(f))\mu_f(f^m(B\setminus U)\}+2^{N+1}\exp(-m h_\tp(f))\mu_f(B_I(c, \kappa r))
\end{multline}
This, together with~(\ref{biebal2}) and~(\ref{biebal1}), gives us
 \begin{multline}\label{largemes}
 \frac{\mu_f(L_N)}{\mu_f(B)}\geq \frac{\exp(-m h_\tp(f))\mu_f(f^m(L_N))}{3 \exp(-m h_\tp(f)) \mu_f(f^m(B\setminus U))}=\frac{\mu_f(f^m(L_N))}{3 \mu_f(f^m(B\setminus U))}\geq \frac{4M_1}{3}.
 \end{multline}

On the other hand, by Lemma~\ref{orderlength} we know that $$\frac{|L_N|}{|B|}\leq M^{1+\frac{1}{\ell_{\min}}+\cdots+\frac{1}{\ell_{\min}^N}}\left(\frac{|f^m(L_N)|}{|B_I(c, \kappa r)|}\right)^{1/\ell_{\min}^N}\leq M^{\ell_0}\eps_1^{1/\ell_{\min}^N}\leq 2M_0. $$ Therefore, by Lemma~\ref{measureadj} inductively we have $$\mu_f(L_N)\leq (C+C^2+\cdots +C^{2M_0})\mu_f(B)\leq M_1 \mu_f(B).$$ This contradicts the inequality~(\ref{largemes}), and completes the proof of the proposition.
\end{proof}

\end{document}